\DeclareMathOperator{\conv}{conv}
\DeclareMathOperator{\diam}{diam}
\theoremstyle{plain}
\newtheorem{theorem}{Theorem}
\crefname{theorem}{Theorem}{Theorems}
\newtheorem{proposition}[theorem]{Proposition}
\crefname{proposition}{Proposition}{Propositions}
\newtheorem{lemma}[theorem]{Lemma}
\crefname{lemma}{Lemma}{Lemmas}
\theoremstyle{definition}
\newtheorem{problem}[theorem]{Problem}
\crefname{problem}{Problem}{Problems}
\newtheorem{openproblem}[theorem]{Open Problem}
\crefname{openproblem}{Open Problem}{Open Problems}
\theoremstyle{remark}
\newtheorem{remark}[theorem]{Remark}
\crefname{remark}{Remark}{Remarks}
\newtheorem{example}[theorem]{Example}
\crefname{example}{Example}{Examples}
\crefname{section}{Section}{Sections}
\title[No-dimensional Tverberg-type problems]{No-dimensional Tverberg-type problems}
\author[A.~Polyanskii]{{Alexander~Polyanskii}}
\address{Alexander Polyanskii,
\newline\hphantom{iii} Department of Mathematics, Emory University, Atlanta, GA, 30322
}
\email{\href{mailto: apolian@emory.edu}{apolian@emory.edu}}
\urladdr{\url{http://polyanskii.com}
}
\keywords{Tverberg theorem}
\pgfplotsset{compat=1.18}
\begin{document}
\begin{abstract}
       Recently, Adiprasito \textit{et al.} have initiated the study of the so-called no-dimensional Tverberg problem. This problem can be informally stated as follows: \textit{Given $n\geq k$, partition an $n$-point set in Euclidean space into $k$ parts such that their convex hulls intersect a ball of relatively small radius.}

   \sloppy In this survey, we aim to present the recent progress towards solving the no-dimensional Tverberg problem and new open questions arising in its context. Also, we discuss the colorful variation of this problem and its algorithmic aspects, particularly focusing on the case when each part of a partition contains exactly 2 points. The latter turns out to be related to the following no-dimensional Tverberg-type problem of Huemer \textit{et al.}: \textit{For an even set of points in Euclidean space, find a perfect matching such that the balls with diameters induced by its edges intersect.} 
\end{abstract}
\maketitle

\section{Introduction}
\label{introduction}
In 1966, Helge Tverberg published his seminal work~\cite{Tverberg1966}, extending Radon's theorem~\cite{Radon1921mengen} on the intersection of two convex hulls. Tverberg's theorem became a cornerstone for many geometric and topological questions, constituting a vast direction at the crossroads between discrete geometry and topological combinatorics.

\begin{quote} \textbf{Tverberg's theorem.}
\textit{For any set of $(k-1)(d+1)+1$ points in $\mathbb{R}^d$, there is a partition into $k$ parts such that their convex hulls intersect.} 
\end{quote}

A simple dimension-counting argument shows that the number $(k-1)(d+1)+1$ in Tverberg's theorem is minimal to ensure the intersection property. This observation raises the following questions: \textit{What happens if we have a smaller point set? Can such a set be partitioned into $k$ parts such that their convex hulls are arranged relatively close to each other?} 

In a recent foundational paper~\cite{adiprasito2020theorems}, Adiprasito~\textit{et al.} addressed these questions. In particular, they explored the so-called no-dimensional Tverberg problem, which informally asks: \textit{Given integers $n\geq k$, partition an $n$-point set in Euclidean space into $k$ parts such that their convex hulls intersect a ball of relatively small radius with respect to the diameter of the set}. For the formal statement, see \cref{problem standard}. By considering the vertex set of a regular unit $(n-1)$-dimensional simplex, one can compute that partitioning into $k$ parts of nearly equal size minimizes the radius of a ball intersecting the convex hulls of these parts, with the radius equal to $(1-o(1))\sqrt{k/(2n)}$\footnote{Here, $o(1)$ is a function of $n$ and $k$ that approaches $0$ as $k\to \infty$.}.

Given this example, Adiprasito \textit{et al.} establish an upper bound for the minimum radius in the no-dimensional Tverberg problem, which is tight up to a multiplicative constant; see  \cite{adiprasito2020theorems}*{Theorem~1.3}.

\begin{theorem}[No-dimensional Tverberg theorem]
\label{adiprasito theorem}    
    For integers $n\geq k\geq 2$ and an $n$-point set of diameter $D$ in Euclidean space, there exist a partition of the set into $k$ subsets and a ball of radius 
\[
    (2 + \sqrt{2}) \sqrt{\frac{k}{n}}\cdot D
\]
intersecting the convex hull of each subset.
\end{theorem}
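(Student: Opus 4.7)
The plan is to take the centroid $c = \frac{1}{n}\sum_{i=1}^n p_i$ of the $n$-point set $S$ as the center of the desired ball, and to produce a partition in which every part's centroid lies within the claimed radius of $c$. Since the centroid of any subset of $S$ lies in its convex hull, this immediately forces the ball around $c$ to meet each convex hull. The first ingredient is the variance bound: expanding $\frac{1}{n^2}\sum_{i,j}\|p_i - p_j\|^2 = \frac{2}{n}\sum_i \|p_i - c\|^2$ and using that each pairwise distance is at most $D$ yields $\sum_i \|p_i - c\|^2 \le nD^2/2$, i.e.\ the variance of $S$ is at most $D^2/2$.

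The second ingredient is a sampling estimate. For a uniformly random subset $A \subseteq S$ of size $m$, the standard formula for sampling without replacement gives $\mathbb{E}\|c_A - c\|^2 \le V/m$, where $V = \frac{1}{n}\sum_i \|p_i-c\|^2 \le D^2/2$. Taking $m \approx n/k$ then yields $\mathbb{E}\|c_A - c\|^2 \lesssim kD^2/(2n)$, so by Markov most balanced subsets place their centroid at distance $O(\sqrt{k/n}\,D)$ from $c$. To assemble a full partition, I would iterate greedily: at step $i$, extract from $S_i := S \setminus (A_1 \cup \cdots \cup A_{i-1})$ a subset $A_i$ of size roughly $n/k$ whose centroid $c_i$ is as close to $c$ as possible. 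The conservation identities
\[
|S_i|\,c(S_i) = |A_i|\,c_i + |S_{i+1}|\,c(S_{i+1}), \qquad \sum_{p\in S_i}\|p-c\|^2 = \sum_{p\in A_i}\|p-c\|^2 + \sum_{p\in S_{i+1}}\|p-c\|^2,
\]
let me track how the residual centroid and residual variance evolve, so that the sampling bound continues to apply at each successive step.

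The hard part will be the bookkeeping across all $k$ extractions. Early steps enjoy a large pool but start from the full variance $V$, while later steps face a depleted pool whose centroid may already have drifted from $c$. I expect the right estimate to come from a recursion of the form $r_{i+1}^2 \le r_i^2 + \tfrac{C k}{n} V_i$ with $V_i$ decreasing in a controlled fashion, optimized over the choice of part sizes and the order of extraction; the explicit constant $2+\sqrt{2}$ should then fall out of balancing the ``drift'' term $\|c(S_i)-c\|$ against the ``sampling'' term $\sqrt{V_i/m}$ in the averaging inequality. The most delicate iterates will be the last few, where $|S_i|$ has shrunk to the order of $n/k$ and one can no longer average away the residual; verifying that these still meet the uniform radius bound, and that the transient constants really do conspire to $2+\sqrt{2}$ rather than something larger, is what I anticipate as the main technical obstacle.
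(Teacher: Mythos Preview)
Your overall plan---center the ball at $c(P)$ and arrange that each part's centroid lies near $c(P)$---does match the paper, and your variance/sampling lemma is the right atomic ingredient. The gap is in how you assemble the partition. Greedy extraction of parts of size $m=n/k$ cannot yield a dimension-free constant: if at step $i$ you pick $A_i\subset S_i$ minimizing $\|c(A_i)-c(S_i)\|$, the conservation identity gives $\|c(S_{i+1})-c(S_i)\|=\tfrac{m}{|S_i|-m}\,\|c(A_i)-c(S_i)\|=\tfrac{1}{k-i}\,\epsilon_i$, and since the diameter bound only guarantees $\epsilon_i\le D\sqrt{k/(2n)}$ uniformly, the drift obeys $d_{i+1}\le d_i+\epsilon_i/(k-i)$, which telescopes to $d_k\lesssim H_{k-1}\cdot D\sqrt{k/n}$, an unavoidable $\log k$ loss. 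If instead you pick $A_i$ to minimize $d_{i+1}$, the same identity forces $\|c(A_i)-c\|\le (k-i+1)d_i+(k-i)d_{i+1}$, which blows up for early $i$. There is no choice that simultaneously keeps both the extracted centroid and the residual centroid within $O(\sqrt{k/n})\,D$ of $c$; your recursion $r_{i+1}^2\le r_i^2+\tfrac{Ck}{n}V_i$ governs the drift $d_i$, not $\max_i\|c(A_i)-c\|$.

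The paper avoids this tension by \emph{halving} rather than peeling off $n/k$ points. Its key lemma (Lemma~\ref{lemma adiprasito}, stated in the colorful form) splits a $2m$-point set into two equal halves with $\|c(Q)-c(P)\|\le \tfrac{\operatorname{diam} P}{\sqrt{2(2m-1)}}$; because the halves have equal size, the identity $c(Q)+c(P\setminus Q)=2c(P)$ pins down \emph{both} centroids with a single averaging step, so no drift accumulates. Iterating over $\log_2 k$ levels, the errors form a geometric series with ratio $\sqrt{2}$, summing to $(1+\sqrt{2})\sqrt{k/n}\,D$ for the colorful problem; the reduction $R(\ell_2,k,r)\le R_c(\ell_2,k,\lfloor r\rfloor)$ together with $r<2\lfloor r\rfloor$ then costs a further factor $\sqrt{2}$, which is exactly how $(1+\sqrt{2})\sqrt{2}=2+\sqrt{2}$ arises. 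Replacing your one-at-a-time extraction by this recursive bisection is the missing idea.
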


The lack of dependence of the statement on the specific dimension motivates the name of this theorem. Essentially, this result holds in the real Hilbert space $\ell_2$.

It turned out that the proof of \cref{adiprasito theorem} reduces to showing the existence of a ball that not only intersects the convex hulls of all subsets but also contains particular points of these convex hulls --- their centroids. This idea, in one way or another, is used in all results that generalize \cref{adiprasito theorem}. Thus, it is natural to consider the following refinement of the no-dimensional Tverberg problem: \textit{Given integers $n \geq k$, partition an $n$-point set in $\ell_2$ into $k$ parts such that their centroids lie within a ball of relatively small radius with respect to the diameter of the set.}

This problem can be approached from a slightly different perspective, namely by attempting to prove that certain small balls centered at the centroids of these $k$ parts intersect. If these balls are assumed to be congruent, we are essentially considering an equivalent problem. However, if we start varying the radii depending on the points in each part, a new Tverberg-type problem emerges. As will be discussed at the end of \cref{section euclidean}, this very observation was employed in~\cite{barabanshchikova2024intersectingballs} to completely resolve one of the colorful variation of the no-dimensional Tverberg problem; see \cref{problem colorful euclidean new}. 

If additionally we assume that $n=2k$ and each part contains exactly 2 points, then the latter version of the problem has an interesting choice of radii, first explored by Huemer~\textit{et al.}~\cite{huemer2019matching}: The \textit{diametral ball} induced by a pair $\{a,b\}$ of points in $\ell_2$ is the ball whose diameter is the line segment $ab$. In particular, they studied the following problem: \textit{For an even set of points, find a perfect matching such that the diametral balls induced by the edges intersect.} For a more general variation of the problem, see \cref{section k=2}. At nearly the same time as Adiprasito \textit{et al.}, Huemer~\textit{et al.} proved the following result.

\begin{theorem}
\label{red-blue matching}
For any $n$ red and $n$ blue points in the plane, there is a perfect red-blue matching such that the diametral disks induced by the edges of the matching intersect.
\end{theorem}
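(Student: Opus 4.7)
The plan is to consider the perfect red--blue matching $M=\{(a_i,b_i)\}_{i=1}^{n}$ that \emph{maximizes} the quantity $\sum_{i=1}^{n}\|a_i-b_i\|^{2}$, and then to show that the $n$ diametral disks of $M$ share a common point. Writing $m_i=(a_i+b_i)/2$ and $v_i=(b_i-a_i)/2$, the disk $D_i$ has center $m_i$ and radius $r_i=\|v_i\|$, so the goal is to produce $p\in\bigcap_{i=1}^{n}D_i$.

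The first step is to extract a pairwise inequality from the optimality of $M$. Swapping two matched pairs $(a_i,b_i),(a_j,b_j)$ for $(a_i,b_j),(a_j,b_i)$ changes the objective by exactly $2\langle a_i-a_j,\,b_i-b_j\rangle$, so maximality forces $\langle a_i-a_j,\,b_i-b_j\rangle\leq 0$ for every $i,j$. Since $a_i-a_j=(m_i-m_j)-(v_i-v_j)$ and $b_i-b_j=(m_i-m_j)+(v_i-v_j)$, a difference-of-squares expansion rewrites this as
\[
\|m_i-m_j\|^{2}\;\leq\;\|v_i-v_j\|^{2}\qquad\text{for all }i,j.\qquad(\star)
\]

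The second step turns $(\star)$ into the existence of $p$. By Sion's minimax theorem applied to the convex functions $f_i(p)=\|p-m_i\|^{2}-r_i^{2}$, the intersection $\bigcap_{i}D_i$ is nonempty if and only if for every probability vector $\lambda=(\lambda_1,\dots,\lambda_n)$
\[
\sum_{i}\lambda_i\|m_i-\bar m_\lambda\|^{2}\;\leq\;\sum_{i}\lambda_i r_i^{2},\qquad \bar m_\lambda:=\sum_{i}\lambda_i m_i.
\]
Applying the variance identity $\sum_i\lambda_i\|x_i-\bar x_\lambda\|^{2}=\tfrac12\sum_{i,j}\lambda_i\lambda_j\|x_i-x_j\|^{2}$ to both the $m_i$ and the $v_i$, and then invoking $(\star)$, supplies exactly this inequality:
\[
\sum_{i}\lambda_i\|m_i-\bar m_\lambda\|^{2}\;\leq\;\sum_{i}\lambda_i\|v_i-\bar v_\lambda\|^{2}\;\leq\;\sum_{i}\lambda_i\|v_i\|^{2}\;=\;\sum_{i}\lambda_i r_i^{2}.
\]

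The principal obstacle is identifying the correct cost function. The naive choice of \emph{minimum} total (squared) length fails already for $n=2$: with red points $(0,0),(10,0)$ and blue points $(0,1),(10,1)$ the unique shortest matching yields two disjoint disks of radius $\tfrac12$ centered at $(0,\tfrac12)$ and $(10,\tfrac12)$. Maximizing $\sum\|a_i-b_i\|^{2}$ instead is equivalent, via $\sum\|a_i-b_i\|^{2}=\mathrm{const}-4\sum\|m_i-c\|^{2}$ with $c$ the common centroid, to clustering the midpoints around $c$, and this clustering is precisely what drives $(\star)$. Incidentally, nothing in the argument uses planarity, so the same proof gives a common point in any Euclidean space, consistent with the no-dimensional theme of the survey.
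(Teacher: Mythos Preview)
Your proof is correct. It shares the decisive idea with the argument the survey presents (the Pirahmad--Polyanskii--Vasilevskii sketch): pick the red--blue matching that maximizes $\sum\|a_i-b_i\|^2$. The difference is in the second half. The paper introduces $g(x)=\max_i\langle a_i-x,\,b_i-x\rangle$, assumes its minimum is positive, reads off from the subdifferential that the minimizer is a convex combination of the active midpoints, and then locates a single swap $r_ib_i,r_jb_j\mapsto r_ib_j,r_jb_i$ that strictly increases $f$, a contradiction. You instead extract the full family of pairwise swap inequalities up front, rewrite them as $\|m_i-m_j\|^2\le\|v_i-v_j\|^2$, and close with Sion's minimax together with the variance identity $\sum_i\lambda_i\|x_i-\bar x_\lambda\|^2=\tfrac12\sum_{i,j}\lambda_i\lambda_j\|x_i-x_j\|^2$. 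Your route is a clean direct argument that makes transparent that only the $2$-swap optimality conditions are used and, as you note, is dimension-free; the paper's route stays closer to the Tverberg--Roudneff optimization template and is what the authors generalize to more colors. The two are essentially dual to each other: your minimax equality is the global form of the subdifferential condition they invoke locally.
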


Later, it was shown~\cite{Pirahmad2022}*{Theorem 1.4} that this theorem holds in $\ell_2$, that is, does not depend on the specific dimension of the space as well. So one can consider this generalization of \cref{red-blue matching} as another no-dimensional Tverberg theorem.

\smallskip

The main aim of this survey is to compile known results related to generalizations of these two problems, arising in the works of Adiprasito \textit{et al.}~\cite{adiprasito2020theorems} and Huemer \textit{et al.}~\cite{huemer2019matching}.

In this survey, we discuss a wide range of questions, problems, and conjectures. To avoid confusion in the names, throughout the survey, we refer to problems including conjectures, which are still open in their generality, as \textit{Open Problems} and completely resolved ones simply as \textit{Problems}.

\subsubsection*{Structure of the survey.} In the next section, we briefly review algorithmic versions of Tverberg's theorem, some of its proofs, and the statement of its colorful version. We include only those facts that are highly relevant to other parts of this survey. \cref{section euclidean} discusses the no-dimensional Tverberg problem by Adiprasito \textit{et al.} and its colorful version in $\ell_2$, including the algorithmic aspects of the problem. \cref{section k=2} focuses on the intersecting properties of diametral balls, extending \cref{red-blue matching}. \cref{section banach} covers the known results generalizing the no-dimensional Tverberg theorem for the case of Banach spaces. In \cref{section: applications}, we discuss applications of the no-dimensional Tverberg results. In \cref{section metric spaces}, we propose a very general version of the problem for metric spaces.

\subsubsection*{Notation} Throughout the paper, we use the standard notation $[n]$ for the set $\{1,\dots, n\}$. Also, $\conv P,$ $c(P),$ and $\diam P$ stand for the convex hull, centroid, and diameter of a finite point set $P$. Recall that the \textit{centroid} of a finite point set $P$ is defined by
$
    c(P):=\frac{1}{|P|}\sum_{p\in P} p,
$
where $|P|$ stands for the cardinality of $P$. 

% For a family $\mathcal Q=\{Q_1,\dots, Q_r\}$ of point sets in a metric space $(M,d)$, we define two invariants $\diam_1 \mathcal Q$ and $\diam_2 \mathcal Q$ as follows
% \[
% \diam_1 \mathcal Q:=\max_{j\in [r]} \diam Q_j
% \text{ and }
% \diam_2 \mathcal  Q:= \max_{\substack{x\in Q_i, y\in Q_j\\ i\ne j}} d(x,y).
% \]
% The triangle inequality easily implies that $\diam_1 \mathcal Q\leq 2 \diam_2 \mathcal Q$. One can easily verify that there is no constant $c$ such that $\diam_2 \mathcal Q\leq c \diam_1 \mathcal Q$.

\section{Remarks on Tverberg's theorem}
\label{section discussion of Tverberg theorem its proofs and variations}

\label{three remarks} 
In this section, we present three independent remarks about Tverberg's theorem that are closely related to the rest of the paper. For a comprehensive recent survey on Tverberg’s theorem and its generalization, we refer the reader to the work~\cite{Baranysoberon2018tverberg} of B\'ar\'any and Sober\'on.

First, there is no known polynomial-time \textit{algorithm} to compute a partition satisfying Tverberg's theorem; see Open problem 3.15 and the surrounding paragraphs in~\cite{DeLoera2019discrete}. This naturally leads to the question of whether anyone can effectively solve appropriate versions of Tverberg's theorem (see also~\cite{Baranysoberon2018tverberg}*{Section 4.6}), including the no-dimensional Tverberg problems.

Second, Tverberg's theorem can be proven using various \textit{methods}, and among these, we highlight two that have been used to study the no-dimensional Tverberg problem. The first approach relies on the so-called Sarkaria's tensor trick~\cites{sarkaria1992tverberg, barany1997colourful}. The idea is to embed several copies of the original point set in a higher-dimensional space and then apply B\'ar\'any's celebrated colorful Carath\'eodory theorem~\cite{barany1982generalization}. Another approach involves a well-chosen function that depends on a partition and a point (which is expected to be common for the convex hulls in Tverberg's theorem), such that its extremum is attained at a partition and a point satisfying Tverberg's theorem. This method was initially implicitly used by Tverberg~\cite{tverberg1981generalization} and later developed by Tverberg and Vrećica~\cite{Tverberg1993}; an elegant modification of the latter proof was proposed by Roudneff~\cite{Roudneff2001}. We refer the interested reader to the brief exposition of these approaches provided in~\cite{Baranysoberon2018tverberg}*{Section~1.2}.

\sloppy
Lastly, we should mention the \textit{colorful Tverberg conjecture} by B\'ar\'any and Larman~\cite{Barany1992}, the general case of which remains one of the most appealing open problems in the field:

\begin{quote} 
\textbf{Colorful Tverberg conjecture.}
\textit{For pairwise disjoint sets $Q_1,\dots, Q_{d+1}$, each consisting of $k$ points in $\mathbb R^d$, there exists a partition $\{P_1,\dots, P_k\}$ of the union $Q_1\cup \dots \cup Q_{d+1}$ such that each~$P_i$ has exactly one point of each $Q_j$ and the convex hulls of $P_1, \dots, P_k$ intersect.}
\end{quote}

\section{No-dimensional Tverberg problems in \texorpdfstring{$\ell_2$}{l2}}
\label{section euclidean}

\subsection{No-dimensional Tverberg problem and its colorful variation} Recently, Adiprasito \textit{et al.}~\cite{adiprasito2020theorems} considered the following problem which remains open. 

\begin{openproblem}[no-dimensional Tverberg problem]
 \label{problem standard}
    Given an integer $k \geq 2$ and a real $r\geq 1$ such that $n=kr$ is an integer, find the smallest value of $R:=R(\ell_2,k,r)$ such that for any $n$-point set $P$ in $\ell_2$, there are a partition $\{P_1,\dots, P_k\}$ of $P$ and a ball of radius $R\cdot \diam P$ intersecting $\conv P_i$ for all $i\in [k]$.
\end{openproblem}

For this problem, we use the notation $r=n/k$ to represent the average number of points in each subset of the partition. For clarity, one may assume that $r$ is an integer. 

Jung's theorem~\cite{jung1901} on the minimum enclosing ball of a bounded set in $\mathbb R^d$ provides a solution to~\cref{problem standard} for $r=1$:
\[
    R(\ell_2,k,1)= \sqrt{\frac{k-1}{2k}}.
\]

Adiprasito \textit{et al.}~\cite{adiprasito2020theorems}*{the last paragraph of Section~6 and Theorem~1.3} proved that 
\[
    \frac{1-o(1)}{\sqrt{2r}}
    \leq R(\ell_2,k,r)\leq\frac{2+\sqrt{2}}{\sqrt{r}}.
\]
As we discussed in the introduction, the lower bound follows from considering $P$ as the vertex set of a regular simplex, while the upper bound is a consequence of \cref{adiprasito theorem}. This theorem can be directly derived from their corresponding result on the colorful variation of \cref{problem standard}.

\begin{openproblem}[colorful no-dimensional Tverberg problem]
\label{problem colorful euclidean}
Given integers $k\geq 2$ and $r\geq 1$, find the smallest value of $R_c:=R_c(\ell_2, k,r)$ such that for any pairwise disjoint sets $Q_1,\dots, Q_r$ in $\mathbb \ell_2$, each of size $k$, there is a partition $\{P_1,\dots, P_k\}$ of the union $P=Q_1\cup\dots \cup Q_r$, with $|P_i\cap Q_j|=1$ for any $i\in [k]$ and $j\in [r]$, and a ball of radius $R_c \cdot \max_{j\in [r]} \diam Q_j$ intersecting $\conv P_i$ for all $i\in [k]$.
\end{openproblem}

For consistency between Open Problems \ref{problem standard} and \ref{problem colorful euclidean}, we denote the size of $P$ by $n=kr$ in the latter problem. 

The case $r=1$ in \cref{problem colorful euclidean} also follows from Jung's theorem:
\[
R_c(\ell_2,k,1)=\sqrt{\frac{k-1}{2k}}.
\]

\begin{example}
    \label{example simplex}
    To prove the same lower bound for \cref{problem colorful euclidean}, one can choose each $Q_i$ as a set of unit vectors pointing to vertices of a regular $(k-1)$-simplex centered at the origin such that the linear spans of $Q_i$ are pairwise orthogonal. The sets $Q_1,\dots, Q_r$ can be easily shown to yield the following lower bound:
    \[
        \frac{1-o(1)}{\sqrt{2r}}
        \leq R_c(\ell_2, k,r),
    \]
    where the function $o(1)$ depends only on $k$.
\end{example}

Originally, Adiprasito \textit{et al.}~\cite{adiprasito2020theorems}*{Theorem~6} studied \cref{problem colorful euclidean} under the slighly different condition that the ball has radius $R_c \cdot \diam P$ instead of $R_c\cdot \max_{j\in [r]} \diam Q_j$. The current version of \cref{problem colorful euclidean} was proposed by Choudhary and Mulzer in~\cite{Choudhary2022}, and later its extension for Banach spaces was also considered by Ivanov in~\cites{Ivanov2021}; see \cref{problem colorful banach}. 

Although Adiprasito \textit{et al.}~\cite{adiprasito2020theorems} stated their result in slightly different terms, their proof implies the following bounds for $R$ in the above-mentioned formulation of \cref{problem colorful euclidean}:
\begin{equation}
\label{equation colorful adiprasito}
R_c(\ell_2,k,r)\leq \frac{1+\sqrt{2}}{\sqrt{r}}.
\end{equation}

Adiprasito \textit{et al.} obtained the upper bound in \cref{problem standard} by noting that if $k\geq 2$ is an integer and $r\geq 1$ is a real number such that $n = kr$ is an integer, then $r < 2\lfloor r \rfloor$ and 
\[
R(\ell_2, k, r) \leq R(\ell_2, k, \lfloor r \rfloor) \leq R_c(\ell_2, k, \lfloor r \rfloor).
\]

To prove~\eqref{equation colorful adiprasito}, Adiprasito \textit{et al.} successively partition the sets $Q_1,\dots, Q_r$ by using \cref{lemma adiprasito}, which we state below only for even sets. According to their proof, the desired ball is centered at $c(P)$ and contains $c(P_i)\in \conv P_i$ for each $i\in [k]$. To bound the distance between $c(P)$ and $c(P_i)$, they apply the triangle inequality\footnote{Let us note the indirectly related work by Simon~\cite{simon2016average}, which addresses a topological version of Tverberg's theorem on the coincidence of centroids of images of points chosen on disjoint faces of a simplex.}. 
\begin{lemma} 
\label{lemma adiprasito}
    For pairwise disjoint $2m$-point sets $Q_1,\dots, Q_{r}$ in $\ell_2$, there is a subset \( Q \) of the union \( P:=Q_1\cup \dots \cup Q_r \), sharing exactly $m$ points with $Q_i$ for each $i\in [r]$, such that
\[
    \|c(Q)-c(P)\| \leq \frac{1}{\sqrt{2m-1}}\cdot \frac{\diam P}{\sqrt{2r}}.
\]
\end{lemma}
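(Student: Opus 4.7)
The plan is a first-moment probabilistic argument. I would let $Q$ be the random subset of $P$ obtained by choosing, independently for each $i\in[r]$, a uniformly random $m$-element subset of $Q_i$, and prove
\[
\mathbb{E}\,\|c(Q)-c(P)\|^2 \leq \frac{(\diam P)^2}{2r(2m-1)};
\]
the lemma then follows since at least one realization attains at most the mean. To set up, define $Y_x := \mathbf{1}[x\in Q] - 1/2$ for each $x\in P$, so that $\sum_{x\in Q_i} Y_x = 0$ deterministically for every $i$. A direct computation gives
\[
c(Q)-c(P) = \frac{1}{mr}\sum_{x\in P} Y_x\, x = \frac{1}{mr}\sum_{i=1}^{r}\sum_{x\in Q_i} Y_x\bigl(x-c(Q_i)\bigr),
\]
where the second equality is legal precisely because $\sum_{x\in Q_i} Y_x = 0$.

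Next I would compute the second moment. Since the samples in distinct $Q_i$'s are independent, all cross terms with $i\neq j$ vanish. Within a single $Q_i$, sampling $m$ from $2m$ elements gives $\mathbb{E}[Y_x^2] = 1/4$ and $\mathbb{E}[Y_xY_y] = -1/(4(2m-1))$ for $x\neq y$ (hypergeometric covariance). Using $\sum_{x\in Q_i}(x-c(Q_i)) = 0$, which implies $\sum_{x\neq y}\langle x-c(Q_i), y-c(Q_i)\rangle = -\sum_x\|x-c(Q_i)\|^2$, the covariance expansion collapses to
\[
\mathbb{E}\Bigl\|\sum_{x\in Q_i} Y_x(x-c(Q_i))\Bigr\|^2 = \frac{m}{2(2m-1)}\sum_{x\in Q_i}\|x-c(Q_i)\|^2.
\]

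Finally, I would use the classical identity $\sum_{x\in Q_i}\|x-c(Q_i)\|^2 = \frac{1}{2|Q_i|}\sum_{x,y\in Q_i}\|x-y\|^2$ together with $\|x-y\|\leq \diam P$ to bound the right-hand side by $(2m-1)(\diam P)^2/2$. Summing over $i$ and dividing by $m^2r^2$ then yields
\[
\mathbb{E}\,\|c(Q)-c(P)\|^2 \leq \frac{(\diam P)^2}{4mr} \leq \frac{(\diam P)^2}{2r(2m-1)},
\]
finishing the argument. The main obstacle is the covariance computation together with its algebraic collapse; a cleaner alternative that sidesteps it is to fix an arbitrary pairing of each $Q_i$ into $m$ pairs $\{a_{i,j}, b_{i,j}\}$ and let $Q$ keep one element of each pair by independent fair signs $\epsilon_{i,j}\in\{\pm 1\}$. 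Then $c(Q)-c(P) = \frac{1}{2mr}\sum_{i,j}\epsilon_{i,j}(a_{i,j}-b_{i,j})$ is a sum of fully independent symmetric vectors, and the bound $\mathbb{E}\|\cdot\|^2 \leq (\diam P)^2/(4mr)$ drops out by orthogonality with no hypergeometric computation at all.
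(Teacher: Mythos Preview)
Your argument is correct and matches the ``averaging approach'' that the paper attributes to Adiprasito \textit{et al.}\ (the survey does not spell out the computation). Both variants you give are valid; in fact each yields the slightly sharper bound $\mathbb{E}\|c(Q)-c(P)\|^2\le (\diam P)^2/(4mr)$, and the pairing version with independent signs is exactly the kind of clean averaging the original proof uses.
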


The proof of this lemma relies on an averaging approach, but an effective derandomization of their proof is not known.

\begin{remark}
    It is worth noting that a similar averaging technique for centroids was also applied to study a no-dimensional version of Carath\'eodory's theorem, a closely related result to Tverberg's theorem. In particular, Adiprasito \textit{et al.}~\cite{adiprasito2020theorems}*{Theorem~2.1} proved the following result (see also \cref{remark2}):

\begin{theorem}[colorful no-dimensional Carath\'eodory theorem]
\label{colorful no-dimensional caratheodory theorem}
 Let $P_1, \dots, P_r$ be $r \geq 2$ pairwise disjoint point sets in $\ell_2$ such that a point $o$ lies in $\conv P_i$ for each $i\in [r]$. Then there exists $Q\subset P_1\cup \dots \cup P_r$ with $|Q\cap P_i|=1$ such that the ball centered at $o$ of radius
\[
     \frac{1}{\sqrt{2r}}\max_{i\in [r]} \diam P_i
\]  
hits $\conv Q$.
\label{caratheodory}
\end{theorem}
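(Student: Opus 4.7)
My plan is to prove the statement by a standard first--moment/second--moment probabilistic argument, in which one samples one point from each $P_i$ independently from a distribution induced by the convex representation of $o$, and then shows that the random centroid concentrates around $o$.

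First, for each $i\in[r]$, since $o\in\conv P_i$, I would fix a convex combination $o=\sum_{p\in P_i}\lambda_i(p)\,p$ with $\lambda_i(p)\geq 0$ and $\sum_{p\in P_i}\lambda_i(p)=1$. Interpreting $\lambda_i$ as a probability distribution on $P_i$, draw independent random points $q_i\in P_i$ with $\Pr[q_i=p]=\lambda_i(p)$, and form the average $\bar q:=\frac{1}{r}\sum_{i=1}^r q_i$. By construction, $\bar q\in\conv\{q_1,\dots,q_r\}$, and $\mathbb{E}[q_i]=o$ for every $i$, so $\mathbb{E}[\bar q]=o$.

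Next I would compute the expected squared deviation $\mathbb{E}\,\|\bar q-o\|^2$. Since the $q_i$ are independent with mean $o$, the cross terms vanish and
\[
\mathbb{E}\,\|\bar q-o\|^2=\frac{1}{r^2}\sum_{i=1}^{r}\mathbb{E}\,\|q_i-o\|^2.
\]
To bound each term I would use the classical symmetrization trick: letting $q_i'$ be an independent copy of $q_i$, independence and the vanishing of the cross term give $\mathbb{E}\,\|q_i-q_i'\|^2=2\,\mathbb{E}\,\|q_i-o\|^2$; since $q_i,q_i'\in P_i$, deterministically $\|q_i-q_i'\|\leq\diam P_i$, hence $\mathbb{E}\,\|q_i-o\|^2\leq\tfrac12(\diam P_i)^2$. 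Setting $D:=\max_{i\in[r]}\diam P_i$, this yields
\[
\mathbb{E}\,\|\bar q-o\|^2\leq\frac{1}{r^2}\cdot r\cdot\frac{D^2}{2}=\frac{D^2}{2r}.
\]

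Finally, the averaging principle guarantees an outcome $(q_1,\dots,q_r)$ of the experiment for which $\|\bar q-o\|\leq D/\sqrt{2r}$. Taking $Q:=\{q_1,\dots,q_r\}$ (with $|Q\cap P_i|=1$ since the $P_i$ are pairwise disjoint) gives a point $\bar q\in\conv Q$ within distance $D/\sqrt{2r}$ of $o$, which is the desired conclusion. I do not foresee a genuine obstacle: the proof is essentially an application of the variance bound, and the key step to get right is the independence-based splitting of the variance across the $r$ color classes, which is what produces the favorable $1/\sqrt{r}$ factor.
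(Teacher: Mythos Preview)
Your proof is correct and is essentially the averaging-over-centroids argument that the paper attributes to Adiprasito \textit{et al.}: sample one point per color class according to the convex-combination coefficients and bound the variance of the resulting centroid via independence and the symmetrization identity $\mathbb{E}\|q_i-q_i'\|^2=2\,\mathbb{E}\|q_i-o\|^2$. The paper does not spell out the proof (it is a survey), but your write-up matches the technique it describes and yields the sharp constant $1/\sqrt{2r}$.
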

If we additionally assume that the points lie in $\mathbb R^d$ instead of $\ell_2$, then the proof of~\cref{colorful no-dimensional caratheodory theorem} admits a linear-time $O(nd)$ derandomization. Additionally, using the Frank--Wolfe algorithm~\cite{frank1956algorithm}, they obtain a generalization of \cref{caratheodory}, which resembles a strengthening of the colorful Carath\'eodory theorem from~\cites{arocha2009very, holmsen2008points}. We refer the interested reader to Sections 2 and 3 in~\cite{adiprasito2020theorems}, where the authors thoroughly review related previous results.
\end{remark}

\subsection{Algorithmic aspects} Motivated by results of Adiprasito \textit{et al.}, Choudhary and Mulzer~\cite{Choudhary2022} initiated the study of algorithmic aspects of the no-dimensional Tverberg problems. Assuming additionally that points of $P$ lie in $\mathbb R^d$ instead of $\ell_2$, they found algorithms with running time $O(nd \log k)$ for \cref{problem standard} and $O(ndk)$ for \cref{problem colorful euclidean}. 
The side effect of their approach is that their bounds on the radius are slightly worse, by a factor of $O(\sqrt{k})$. Specifically, they showed the bound
\[
R(\ell_2, k,r)\leq O(\sqrt{k/r}) \text{ and }R_c(\ell_2,k,r)\leq O(\sqrt{k/r}).
\]

Their proof is based on Sarkaria's tensor trick; see the second remark in \cref{section discussion of Tverberg theorem its proofs and variations}. By applying this trick, Choudhary and Mulzer~\cite{Choudhary2022} reduced Open Problems~\ref{problem standard} and~\ref{problem colorful euclidean} to a problem in $\mathbb R^{dk}$ and used a technical no-dimensional lemma resembling the colorful Carathéodory theorem. The lemma was also proved using an averaging approach. This reduction, on the one hand, allows for efficient derandomization, resulting in polynomial algorithms. On the other hand, it leads to weaker bounds on radii.

In 2023, Har-Peled and Robson~\cite{harpeled2023} found a polynomial algorithm for \cref{problem standard}. Their argument essentially relies on a variation of \cref{lemma adiprasito} for $r=1$. However, their proof can be implemented with a simple deterministic algorithm.  Similarly to the approach by Adiprasito \textit{et al.}~\cite{adiprasito2020theorems}, they successively applied the triangle inequality and this lemma to compute the partition that gives the upper bound
\[
    R(\ell_2,k,r)\leq \sqrt{8/r}
\]
in a running time of $O(nd\log n)$ if the points of $P$ lie in $\mathbb R^d$. Also, they provided a probabilistic algorithm (with a polynomial derandomization), which gives a better upper bound
\[
    R(\ell_2,k,r)\leq \sqrt{2/r}.
\]

\subsection{Second colorful no-dimensional Tverberg problem} 
Very recently, a new colorful no-dimensional Tverberg problem has been studied in~\cite{BarabanshchikovaPolyanskii2024Tverberg}. Note that the only subtle difference between \cref{problem colorful euclidean} and the new problem lies in replacing $\max_{j \in [r]} \diam Q_j$ with the maximum distance between points from distinct parts $Q_j$, that is, 
\[
\diam (Q_1,\dots, Q_r):=\max \{ \|x-y\|  \mid x\in Q_i,y\in Q_j \text{ with } i\ne j\}.
\]

\begin{problem}[second colorful no-dimensional Tverberg problem]
\label{problem colorful euclidean new}
Given integers $k\geq 2$ and $r\geq 2$, find the smallest value of $R_c':=R_c'(\ell_2, k,r)$ such that for pairwise disjoint sets $Q_1,\dots, Q_r$ in $\ell_2$, each of size $k$, there is a partition $\{P_1,\dots, P_k\}$ of the union $P=Q_1\cup\dots \cup Q_r$, with $|P_i\cap Q_j|=1$ for any $i\in [k]$ and $j\in [r]$, and a ball of radius 
\(
    R'_c \cdot \diam (Q_1,\dots, Q_r)
\)
intersecting $\conv P_i$ for all $i\in [k]$.
\end{problem}

Interestingly, there is a colorful version of Jung's theorem proved by Akopyan~\cite{akopyan2013combinatorial}*{Theorem~5}, which involves $\diam(Q_1,\dots, Q_r)$. Thus, exploring connections between \cref{problem colorful euclidean new} and the following result would be worthwhile.
\begin{theorem}[colorful no-dimensional Jung's theorem] 
\label{theorem akopyan}
For finite sets $Q_1,\dots, Q_r$ in $\ell_2$, there is a ball of radius $\diam (Q_1,\dots, Q_r)/\sqrt{2}$ covering one of the sets $Q_1,\dots, Q_r$.
\end{theorem}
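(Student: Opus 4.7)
The plan is to realize \cref{theorem akopyan} as a colorful reformulation of the $\ell_2$ Jung bound $D/\sqrt{2}$, and to mimic the classical proof by replacing the single circumscribed cloud with a bilinear cross-color expansion. Let $D:=\diam(Q_1,\dots,Q_r)$ and, for each $i\in[r]$, let $B_i$ be the smallest closed ball containing $Q_i$, with center $c_i$ and radius $\rho_i$; I will show $\min_i\rho_i\leq D/\sqrt{2}$, so that the corresponding $B_i$ is the desired ball.

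The first step is the standard optimality characterization of the minimum enclosing ball: for each $i$ there exist a nonempty support set $S_i\subseteq Q_i$ with $\|p-c_i\|=\rho_i$ for all $p\in S_i$, and convex weights $(\lambda_p^{(i)})_{p\in S_i}$ summing to $1$, such that $c_i=\sum_{p\in S_i}\lambda_p^{(i)}p$. (Otherwise one could perturb $c_i$ toward $\conv S_i$ and strictly shrink the ball, contradicting minimality.)

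Next, I fix any two indices $i\neq j$ and evaluate the double sum
\[
T_{ij}:=\sum_{p\in S_i}\sum_{q\in S_j}\lambda_p^{(i)}\lambda_q^{(j)}\|p-q\|^2.
\]
Writing $p-q=(p-c_i)+(c_i-c_j)-(q-c_j)$ and expanding $\|p-q\|^2$, each of the three cross terms vanishes after summation because $\sum_p\lambda_p^{(i)}(p-c_i)=0$ and $\sum_q\lambda_q^{(j)}(q-c_j)=0$. What remains is the identity
\[
T_{ij}=\rho_i^2+\rho_j^2+\|c_i-c_j\|^2.
\]
On the other hand, every pair $(p,q)\in S_i\times S_j$ consists of points drawn from distinct sets, so $\|p-q\|\leq D$, whence $T_{ij}\leq D^2$. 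Therefore $\rho_i^2+\rho_j^2\leq D^2$ for every $i\neq j$; picking $i$ to minimize $\rho_i$ and any $j\neq i$ yields $2\rho_i^2\leq D^2$, i.e.\ $\rho_i\leq D/\sqrt{2}$.

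The only nontrivial input is the support/optimality condition for the minimum enclosing ball, which I expect to be the main obstacle only insofar as one must check it works in $\ell_2$; passing to the finite-dimensional affine hull of $Q_i$ makes this immediate. After that the argument is pure bilinear bookkeeping, with no induction on $r$ or on the dimension needed. In fact the intermediate inequality $\rho_i^2+\rho_j^2\leq D^2$ is strictly stronger than \cref{theorem akopyan}, since it shows that at most one of the sets $Q_i$ can fail to be covered by a ball of radius $D/\sqrt{2}$.
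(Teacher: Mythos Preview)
The paper does not supply its own proof of \cref{theorem akopyan}; the result is quoted from Akopyan's work and simply cited. So there is nothing in the paper to compare your argument against.

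That said, your proof is correct and self-contained. The optimality characterization of the minimum enclosing ball (center in the convex hull of the contact points) is standard and, as you note, reduces to the finite-dimensional case by passing to the affine hull of $Q_i$. The bilinear expansion yielding $T_{ij}=\rho_i^2+\rho_j^2+\|c_i-c_j\|^2$ is verified by the vanishing of the weighted sums $\sum_p\lambda_p^{(i)}(p-c_i)$ and $\sum_q\lambda_q^{(j)}(q-c_j)$, and the upper bound $T_{ij}\leq D^2$ follows directly from the definition of $\diam(Q_1,\dots,Q_r)$. Your closing observation that $\rho_i^2+\rho_j^2\leq D^2$ for all $i\neq j$ forces at most one index to have $\rho_i>D/\sqrt{2}$ is a genuine strengthening of the stated theorem. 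One minor point worth making explicit: the argument tacitly requires $r\geq 2$, since otherwise $\diam(Q_1,\dots,Q_r)$ is a maximum over an empty set; this is consistent with how the quantity is used elsewhere in the paper.
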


\begin{remark}
\label{remark connecting old colorful and new one}
    For finite sets $Q_1,\dots,Q_r$ in $\ell_2$, by triangle inequality, we have
    \[
        \max_{j\in [r]} \diam Q_j\leq 2 \diam (Q_1,\dots, Q_r).
    \]
    Hence, we have that $R_c' (\ell_2,k,r)\leq 2R_c (\ell_2,k,r)$. Since there is no $\alpha$ such that
    \[
            \diam (Q_1,\dots, Q_r)\leq \alpha \max_{j\in [r]} \diam Q_j,
    \]
    we cannot easily derive that  $R_c (\ell_2,k,r)\leq \alpha R_c' (\ell_2,k,r)$
    Analogously, for integers $k\geq 2$ and $r\geq 2$ one can show that $R(\ell_2,k,r)\leq R_c'(\ell_2,k,r)$ and cannot easily derive that $R_c' (\ell_2,k,r)\leq \alpha R (\ell_2,k,r)$.
\end{remark}

Given \cref{remark connecting old colorful and new one}, the reader may wonder about the necessity of studying \cref{problem colorful euclidean new}. There are two reasons for this. First, unlike Open Problems~\ref{problem standard} and~\ref{problem colorful euclidean}, \cref{problem colorful euclidean new} has a precise solution. Second, its solution provides a new approach to derandomizing no-dimensional results, which may inspire new techniques for eliminating randomness in arguments for other similar problems.

Barabanshchikova and the author~\cite{BarabanshchikovaPolyanskii2024Tverberg}*{Theorem~1} have completely solved \cref{problem colorful euclidean new} by proving the tight upper bound
\begin{equation}
    \label{equation colored upper bound}
    R_c'(\ell_2,k,r)\leq \frac{1}{\sqrt{2r}}.
\end{equation}
The tightness of this bound can be demonstrated using \cref{example simplex}.

The idea of their proof is very different from the averaging technique used to provide the upper bound for $R(\ell_2, k,r)$ by Adiprasito \textit{et al.} and in subsequent papers. For that, Barabanshchikova and the author adapted ideas from~\cite{Pirahmad2022}*{Theorem~1.4}, where the case $r=2$ was studied. 
In particular, they used the method for proving Tverberg's theorem through optimization; see the second remark in \cref{three remarks}. However, their approach introduces a subtle yet important innovation, which we discuss in the paragraph after the sketch of the solution to \cref{problem red-blue no-dimensional} in \cref{subsection red-blue tverberg matchings}.

Next, we limit ourselves by a brief description of the proof of the general case. Barabanshchikova and the author~\cite{BarabanshchikovaPolyanskii2024Tverberg} consider a function depending on a partition $\{P_1,\dots, P_k\}$, which returns the sum of squared distances between pairs of points from the same subset $P_i$ of the partition. It turns out that this function attains its maximum at the partition such that the balls centered at $c(P_i)$ of radius $\diam P_i/\sqrt{2r}$ intersect. One can easily verify that the ball centered at this common point of radius  $\max_{i\in [k]} \diam P_i/\sqrt{2r}$ contains all $c(P_i)$, which completes the proof of~\eqref{equation colored upper bound}.

Unfortunately, the proof from~\cite{BarabanshchikovaPolyanskii2024Tverberg} involves an NP-hard problem unless $r=2$, in which case, there is a polynomial algorithm. Hence, the problem of finding a polynomial algorithm proving the tight upper bound~\eqref{equation colored upper bound} remains open.

\subsection{Weak colorful Tverberg problem} 
Since the colorful Tverberg conjecture has not been proven yet, it makes sense to ask the following problem, which seems to have never been thoroughly studied.

\begin{openproblem}
Prove a non-trivial upper bound on $R$ satisfying the following condition. For pairwise disjoint sets $Q_1, \dots, Q_{d+1}$, each consisting of $k$ points in $\mathbb R^d$, there exists a partition ${P_1, \dots, P_k}$ of the union $P := Q_1 \cup \dots \cup Q_{d+1}$, where $|P_i \cap Q_j| = 1$ for each $i\in[k]$ and $j\in[d+1]$, and a ball of radius $R \cdot \diam P$ intersecting the convex hulls of $P_i$ for each $i\in[k]$.
\end{openproblem}

\section{No-dimensional problems on diametral balls}
\label{section k=2}

Recall that the \textit{diametral ball} for two points in $\ell_2$ is defined as the ball whose center coincides with their midpoint and whose radius is half the distance between them. In other words, the pair of points is antipodal for their diametral ball. We denote the diametral ball for points $a,b$ by $B(ab)$.

\subsection{Red-blue Tverberg matchings} 
\label{subsection red-blue tverberg matchings}
For the sake of brevity, for an even set of points in $\ell_2$, a matching is called a \textit{Tverberg matching} if the diametral balls induced by its edges share a common point. Recently, Huemer \textit{et al.} studied the following no-dimensional problem.

\begin{problem}
\label{problem red-blue no-dimensional}
    Is it true that for any $k$ red and $k$ blue points in $\ell_2$, there is a perfect red-blue Tverberg matching?
\end{problem}

As discussed earlier, an affirmative answer to \cref{problem red-blue no-dimensional} completely resolves \cref{problem colorful euclidean} when $r=2$. Indeed, for $k$ red and $k$ blue points, if there exists a perfect red-blue matching $\{e_1, \dots, e_k\}$ such that the diametral balls induced by its edges share a common point, then this common point is located at a distance of at most $\diam e_i/2$ from the centroid $c(e_i)$ for each $i\in[k]$. Consequently, this implies the tight bound $R_c'(\ell_2, k,2) \leq 1/2$.

Huemer \textit{et al.}~\cite{huemer2019matching} solved \cref{problem red-blue no-dimensional} under the assumption that all points lie in the plane. Moreover, they showed that a red-blue matching maximizing the sum of the squared distances between the matched points works well in this problem. Unfortunately, their proof heavily relies on analyzing arrangements of points in the plane. 

In 2024, Pirahmad, Vasilevskii, and the author~\cite{Pirahmad2022}*{Theorem~1.4} generalized this result to $\ell_2$ and their proof avoids any case analysis. We include a sketch of their proof.

\begin{proof}[Sketch of the solution to \cref{problem red-blue no-dimensional}]
    Given $k$ red points and $k$ blue points in $\ell_2$, consider all possible red-blue matchings. Among these matchings, let $\mathcal M_1$ be one that maximizes the function $f$, which depends on a red-blue matching $\mathcal M$ and is defined by
    \[
    f(\mathcal M)=\sum_{rb\in \mathcal M} \|r-b\|^2.
    \]
    To prove that $\mathcal M_1$ satisfies the conclusion of the theorem, consider another function $g:\ell_2 \to \mathbb R$ defined by
    \[
    g(x)=\max_{rb\in \mathcal M_1} \{ \langle r-x,b-x \rangle \}= \max_{rb\in \mathcal M_1}\Big\{
    \Big\|\frac{r+b}{2}-x\Big\|^2-\Big\|\frac{r-b}{2}\Big\|^2\Big\}.
    \]
    Note that if $\langle r-x,b-x\rangle\leq 0$, then $x$ belongs to the ball $B(rb)$. Hence, if $g$ attains a non-positive value at some point, then this point is common for the balls $B(rb)$, $rb\in \mathcal M_1$.

    Without loss of generality assume that $g$ attains its minimum at the origin $o$ and $g(o)>0$. Consider a submatching $\mathcal M_2$ defined by
    \[
        \mathcal M_2=\big\{rb\in \mathcal M_1:\langle r, b\rangle =g(o)\big\}.
    \]
    By considering the subdifferential of $g$ at $o$, one can easily show that
    \[
    o\in \conv \Big\{ \frac{r+b}{2}:rb\in \mathcal M_2 \Big\}.
    \]

    So, there are $r_1b_1, \dots, r_nb_n\in \mathcal M_2$ and positive $\lambda_1,\dots, \lambda_n$ such that
    \[
        o=\sum_{i=1}^n \lambda_i \frac{r_i+b_i}{2} \text{ or equivalently } \sum_{i=1}^n \lambda_i r_i = - \sum_{i=1}^n \lambda_i b_i.
    \]

    Therefore,
    \[
    0\geq \langle \sum_{i=1}^n \lambda_i r_i ,\sum_{i=1}^n \lambda_i b_i\rangle =\sum_{i=1}^n \lambda_i^2  \langle r_i ,b_i\rangle + \sum_{1\leq i < j \leq n} \lambda_i\lambda_j (\langle r_i, b_j\rangle +\langle r_j, b_i \rangle).
    \]
    Since $\langle r_i,b_i\rangle =g(o)>0$, we conclude that there exist distinct $i,j$ such that 
    \[ 
    \langle r_i, b_j\rangle + \langle r_j, b_i\rangle<0<2g(o)=\langle r_i,b_i\rangle +\langle r_j, b_j\rangle.\]
    Hence 
    \[
    \|r_i-b_j\|^2+\|r_j-b_i\|^2>\|r_i-b_i\|^2+\|r_j-b_j\|^2, 
    \]
    and thus for the matching $\mathcal M_3=\mathcal M_1\setminus \{r_ib_i, r_jb_j\} \cup \{r_ib_j, r_jb_i\}$, we have $f(\mathcal M_3)>f(\mathcal M_1),$ a contradiction with the choice of the matching $\mathcal M_1$.
\end{proof}

The solution~\cite{BarabanshchikovaPolyanskii2024Tverberg} to \cref{problem colorful euclidean new} for more than 2 colors follows the same strategy. Interestingly, the proofs in~\cites{Pirahmad2022, BarabanshchikovaPolyanskii2024Tverberg} involve two functions, unlike the proofs~\cites{tverberg1981generalization, Tverberg1993, Roudneff2001} of Tverberg's theorem, which used only one function that depends on a partition and a potential common point. In the solution to \cref{problem red-blue no-dimensional}, the function $f$ is needed to choose a desired matching (partition), while the function $g$ indicates whether a point belongs to all balls induced by a matching. We refer the interested reader to Section 8 in~\cite{Pirahmad2022}, where the authors explain why attempting to mimic the proofs in~\cites{tverberg1981generalization,Tverberg1993, Roudneff2001} (that is, using only the function 
$g$ without $f$) does not yield an affirmative answer to \cref{problem red-blue no-dimensional}.

We note that Pirahmad, Vasilevskii, and the author~\cite{Pirahmad2022}*{Theorem~1.2} gave an alternative solution to \cref{problem red-blue no-dimensional} in the plane, using a Borsuk--Ulam-type argument. This observation naturally raises the question of whether topological methods could be effectively applied to \cref{problem red-blue no-dimensional} in the general case and other no-dimensional Tverberg-type problems.

\subsection{Max-sum matchings}
As we can see from the sketch of the solution to \cref{problem red-blue no-dimensional}, the key idea is to choose a matching that maximizes some function. It turns out that this approach is quite fruitful in the context of various generalizations of \cref{problem red-blue no-dimensional}. As an example, in addition to the results in this subsection, see also the discussion on max-sum trees in the next one.

For any even set of points in $\ell_2$, a matching that maximizes the sum of the distances between the matched points is called a \textit{max-sum matching}. Inspired by the result of Huemer \textit{et al.}~\cite{huemer2019matching}, Bereg \textit{et al.}~\cite{bereg2019maximum} considered the problem of whether a max-sum matching in the plane is a Tverberg matching.

\begin{openproblem}
    \label{question maximizing matching}
    Is it true for any even set of points in $\ell_2$, its max-sum matching is a Tverberg matching?
\end{openproblem}

Bereg \textit{et al.}~\cite{bereg2019maximum} gave an affirmative answer to \cref{question maximizing matching} under the assumption that all points lie on the plane. An alternative proof of a slightly stronger form of this result was provided by Barabanshchikova and the author~\cite{barabanshchikova2024intersectingballs}*{Theorem~2}. Both proofs are based on a routine case analysis in the plane. Unfortunately, a higher-dimensional generalization of this result remains open; see~\cite{Pirahmad2022}*{Problem~9.3}.

The motivation of Bereg~\textit{et al.}~\cite{bereg2019maximum} for studying max-sum matchings comes from a conjecture of Fingerhut~\cite{Eppstein1995}. It was shared with Eppstein who added it to his collection of research problems in computational geometry.

\begin{problem}[Fingerhut's conjecture]
Let a matching $\{a_i b_i : i\in[n]\}$ be a max-sum matching for a set of $2n$ points in the plane. Prove that there exists a point $o$ in the plane such that
\begin{equation}
\label{equation fingerhut}
    \|a_i - o\|+\|b_i - o\| \leq \frac{2}{\sqrt{3}} \|a_i - b_i\| \text{ for all }i\in[n].
\end{equation}
\end{problem}

By considering the vertex set of an equilateral triangle taken twice (that is, a multiset of 6 points in the plane), one can easily compute that the multiplicative constant $2/\sqrt{3}$ in the problem is chosen optimally. Fingerhut's conjecture can be considered a Tverberg-type problem concerning the intersection property of ellipses induced by the max-sum matching. The result of Bereg \textit{et al.} implies that this conjecture holds with the constant $2/\sqrt{3}$ replaced by $\sqrt{2}$. Barabanshchikova and the author~\cite{barabanshchikova2024intersectingellipses} completely resolved this conjecture using an optimization approach similar to the one from the sketch of the solution to \cref{problem red-blue no-dimensional}.

Very recently, the colorful version of Fingerhut's conjecture was proved by~P{\'e}rez-Lantero and Seara~\cite{perez2024center} based on ideas from~\cite{barabanshchikova2024intersectingellipses}. Their result corresponds to the planar case of the following open no-dimensional problem.

\begin{openproblem}[No-dimensional colorful Fingerhut problem]
\label{question fingerhut colorful}
    Let a matching $\{r_i b_i : i = 1,\dots, n\}$ be a red-blue max-sum matching for a set of $n$ red and $n$ blue points in $\ell_2$. Is it true that there exists a point $o$ in $\ell_2$ such that
\[
    \|r_i - o\|+\|b_i - o\| \leq \sqrt{2} \|r_i - b_i\| \text{ for all }i\in[n]?
\]
\end{openproblem}

By considering \cref{example simplex} for $r=2$, one can easily verify that the multiplicative constant $\sqrt2$ in \cref{question fingerhut colorful} is the best possible.

Interestingly, Fekete and Meijer~\cite{fekete1997angle} proved the existence of a perfect Tverberg matching in the plane back in 1997. Using a Borsuk--Ulam-type argument, they showed that for any even set of points in the plane, there exists a perfect matching \( \mathcal{M} \) and a point \( z \) in the plane such that either \( z \in \{x,y\} \) or \( \angle xzy \geq 2\pi/3 \) for all \( xy\in \mathcal{M}\). In particular, this implies that for any even set of points, there is a perfect matching satisfying~\eqref{equation fingerhut} from Fingerhut's conjecture. Essentially, the same fact, with the same proof, was also found by Dumitrescu, Pach, and T\'oth in the paper\cite{dumitrescu2012drawing}*{Lemma 2} from 2012.

\subsection{Tverberg graphs} Sober\'on and Tang~\cite{soberon2020tverberg}*{Problem~1.1} considered a notable generalization of \cref{red-blue matching} about the so-called Tverberg graphs. Let \( G \) be a graph whose vertex set is a finite set of points in \( \ell_2 \). We say that \( G \) is a \textit{Tverberg graph} if
\[
    \bigcap_{ xy\in E(G)} B (xy) \neq \emptyset.
\]

\begin{openproblem}
\label{problem Tverberg graphs global}
    For a finite set of points in $\ell_2$, determine the family of its Tverberg graphs.
\end{openproblem}

As a particular problem, Sober\'on and Tang~\cite{soberon2020tverberg} suggested studying the existence of Tverberg graphs with special properties such as being a perfect matching, Hamiltonian cycle, spanning tree, etc.
For example, they proved that for an odd point set in the plane, there exists a Hamiltonian cycle which is a Tverberg graph. Their proof is based on an optimization approach that differs from the method used in the solution to \cref{problem red-blue no-dimensional}. Later, Pirahmad, Vasilevskii, and the author~\cite{Pirahmad2022}*{Theorem~1.1} found a simple Borsuk--Ulam-type proof showing the same for any finite point set in the plane. A higher-dimensional version of this problem remains open; see~\cite{Pirahmad2022}*{Problem~9.1}.

\begin{openproblem}
\label{question Hamiltonian cycles}
    Is it true for any finite set of points in $\ell_2$, there is a Hamiltonian cycle that is a Tverberg graph?
\end{openproblem}

In 2023, Abu-Affash \textit{et al.}~\cite{Affash2022piercing} showed that for any point set in the plane, a spanning tree that maximizes the sum of distances between connected points, the so-called \textit{max-sum tree}, is a Tverberg graph. Their proof, like many other planar results mentioned above, relies on case analysis, which is specific for the plane. The key observation is that the center of the smallest radius ball enclosing the point set belongs to all balls induced by the edges of the max-sum tree. Later, using this observation and connectivity of non-acute graphs (see also \cite{Pirahmad2022}*{Section~5.2}), Barabanshchikova and the author~\cite{barabanshchikova2024intersectingballs}*{Theorem~1} extended the result on max-sum trees in $\ell_2$.

\smallskip

We conclude this subsection with the following Tur\'an-type problem (compare this with \cref{no-dimensional selection lemma}).
\begin{openproblem}
    For a positive integer $n$, find the maximum $m:=m(n)$ such that any $n$-point set in $\ell_2$ has a Tverberg graph with $m$ edges.
\end{openproblem}
We conjecture that \( m(n) \) has quadratic asymptotics in \( n \). If true, it would be interesting to determine the coefficient of \( n^2 \).

\section{No-dimensional Tverberg problems in Banach spaces}
\label{section banach}

In 2021, Ivanov~\cite{Ivanov2021} initiated the study of the no-dimensional colorful Tverberg problem when the point set $P$ lies in an arbitrary Banach space. This problem remains far from being resolved in its generality.

\begin{openproblem}[colorful no-dimensional Tverberg problem for Banach spaces]
\label{problem colorful banach}
Given a Banach space $X$, integers $k\geq 2$ and $r\geq 1$, find the minimum $R_c:=R_c(X, k,r)$ such that for any pairwise disjoint sets $Q_1,\dots,Q_r$ in $X$, each of size $k$, there is a partition $\{P_1,\dots, P_k\}$ of the union $P=Q_1\cup \dots \cup Q_r$, with $|P_i\cap Q_j|=1$ for any $i\in[k]$ and $j\in[r]$, and a ball of radius $R_c \cdot \max_{j\in [r]} \diam Q_j$ intersecting $\conv P_i$ for all $i\in[k]$.
\end{openproblem}

\subsection{Lower bounds}
Our first goal is to prove a lower bound for $R_c(X, k,r)$ that holds for any infinite-dimensional Banach space $X$.
\begin{proposition}
\label{proposition: banach infinite}
    We have $R_c(\ell_2,k,r)\leq 2 R_c(X,k,r)$.
\end{proposition}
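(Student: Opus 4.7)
The plan is to transport hard configurations from $\ell_2$ into $X$ via an almost-isometric embedding supplied by Dvoretzky's theorem, apply the colorful no-dimensional Tverberg statement in $X$ to the image, and pull the conclusion back to $\ell_2$. Fix $\varepsilon>0$ and an arbitrary configuration $Q_1,\dots,Q_r$ of $r$ pairwise disjoint $k$-point sets in $\ell_2$; since the union is finite, the whole configuration lies in some $n$-dimensional subspace isometric to $\ell_2^n$. By Dvoretzky's theorem, $X$ contains an $n$-dimensional subspace $Y$ together with a linear isomorphism $T:\ell_2^n\to Y$ which, after rescaling, satisfies $\|T\|\leq 1$ and $\|T^{-1}\|\leq 1+\varepsilon$. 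Set $Q_j':=T(Q_j)\subset Y\subset X$, so that the $X$-diameter of $Q_j'$ is at most the $\ell_2$-diameter of $Q_j$.

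Next, I would apply the definition of $R_c(X,k,r)$ to $\{Q_j'\}$ in $X$ to obtain a partition $\{P_1',\dots,P_k'\}$ of $Q_1'\cup\dots\cup Q_r'$ with $|P_i'\cap Q_j'|=1$ and a center $x_0\in X$ such that every $\conv P_i'$ meets the $X$-ball around $x_0$ of radius $R:=R_c(X,k,r)\cdot\max_j \diam Q_j'$. The main obstacle is that $x_0$ need not lie in $Y$, so it cannot be transported back by $T^{-1}$ directly. I would bypass this by choosing any point $y_0\in \conv P_1'$ that meets this ball (such a point lies in $Y$ because $P_1'\subset Y$) and relocating the center from $x_0$ to $y_0$; the triangle inequality then gives $\|y_0-p_i'\|_X\leq 2R$ for every intersection point $p_i'$, so the $X$-ball of radius $2R$ around $y_0\in Y$ still meets every $\conv P_i'$. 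This step is precisely where the factor of $2$ in the proposition arises.

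Finally, one pulls everything back through $T^{-1}$: its linearity produces a partition $P_i:=T^{-1}(P_i')$ of $Q_1\cup\dots\cup Q_r$ with $|P_i\cap Q_j|=1$ and $T^{-1}(p_i')\in\conv P_i$, while the bound $\|T^{-1}\|\leq 1+\varepsilon$ inflates $X$-distances by at most this factor when they are measured in $\ell_2$. Combined with the diameter comparison from the first paragraph, one obtains an $\ell_2$-ball of radius at most $2(1+\varepsilon)R_c(X,k,r)\cdot\max_j \diam Q_j$ that meets every $\conv P_i$. Since $Q_1,\dots,Q_r$ was arbitrary, letting $\varepsilon\to 0$ yields $R_c(\ell_2,k,r)\leq 2R_c(X,k,r)$. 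The delicate part is the center-relocation step; without extra structure on $X$, such as uniform convexity or a well-behaved nearest-point projection onto $Y$, it appears hard to avoid this factor of $2$.
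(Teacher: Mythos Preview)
Your proof is correct and follows essentially the same route as the paper: both combine Dvoretzky's theorem with the observation that a ball in $X$ meeting sets lying in a subspace $Y$ can be recentered at a point of $Y$ at the cost of doubling the radius. The paper isolates this recentering step as a separate lemma (if $Y\subset X$ then $R_c(Y,k,r)\leq 2R_c(X,k,r)$, proved by covering $Y\cap B$ with a ball of radius $2$ centered anywhere in $Y\cap B$), and then chains it with the near-isometry from Dvoretzky, whereas you inline the recentering directly into the transport argument; the content is the same, and your identification of the relocation step as the source of the factor $2$ matches the paper's.
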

This proposition is a direct corollary of Dvoretzky's theorem~\cite{albiac2006topics}*{Theorem~12.3.6} and the following lemma.
\begin{lemma}
\label{lemma subspace}
    If $Y$ is a subspace of a Banach space $X$, then
        $R_c(Y,k,r) \leq 2R_c(X,k,r).$
\end{lemma}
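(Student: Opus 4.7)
The plan is to take any configuration in $Y$, treat it as a configuration in the ambient space $X$, apply the definition of $R_c(X,k,r)$ to obtain both a partition and an enclosing ball in $X$, and then transfer the enclosing ball down into $Y$ at the cost of a factor of $2$.

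In more detail: given pairwise disjoint $Q_1,\dots,Q_r\subset Y$, each of size $k$, I view them as subsets of $X$ (the norm on $Y$ is the restriction of the norm on $X$, so $\diam Q_j$ is unambiguous). By definition of $R_c(X,k,r)$, there exist a partition $\{P_1,\dots,P_k\}$ of $P:=Q_1\cup\dots\cup Q_r$ with $|P_i\cap Q_j|=1$ and a closed ball $B\subset X$ of radius $\rho:=R_c(X,k,r)\cdot \max_{j\in[r]}\diam Q_j$ meeting $\conv P_i$ for each $i$. For every $i\in[k]$ I select a witness $x_i\in B\cap \conv P_i$; since $P_i\subset Y$ and $Y$ is a linear subspace, $x_i\in Y$.

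The main observation is that although $B$ itself may not lie inside $Y$, the witnesses $x_1,\dots,x_k$ all lie in $Y$ and are pairwise at distance at most $2\rho$, being contained in $B$. Consequently, the ball in $Y$ centered at $x_1$ of radius $2\rho$ contains every $x_i$, and hence meets every $\conv P_i$. The same partition $\{P_1,\dots,P_k\}$ therefore witnesses $R_c(Y,k,r)\le 2R_c(X,k,r)$.

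There is essentially no technical obstacle here; the one genuine subtlety, and what forces the factor of $2$, is that the center of the ball supplied by the definition in $X$ need not belong to $Y$, so recentering at an in-subspace witness costs exactly one extra radius. Eliminating this factor would presumably require a projection-type construction that uses more structure than a generic Banach space affords (for instance, an orthogonal projection in a Hilbert space).
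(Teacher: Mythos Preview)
Your proof is correct and is essentially the same argument as the paper's. The paper phrases the key step abstractly --- the slice $Y\cap B$ of a radius-$\rho$ ball has diameter at most $2\rho$, hence is covered by a radius-$2\rho$ ball centered at any of its points --- while you make this concrete by naming the witnesses $x_i\in \conv P_i\cap B\subset Y$ and recentering at $x_1$; the underlying idea (triangle inequality inside $B$, recentre at a point already known to lie in $Y$) is identical.
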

\begin{proof}[Proof of~\cref{proposition: banach infinite}]
 Since any $kr$ points in $\ell_2$ lie in a linear subspace of dimension $kr$, we have 
    \begin{equation}
    \label{equation choise of d}
        R_c(\ell_2,k,r)=R_c(\mathbb R^{kr}, k,r).
        \end{equation}

    For $d=kr$ and any $\varepsilon>0$, Dvoretzky's theorem claims that there is a $d$-dimensional subspace $X_\varepsilon$ of $X$ and a linear map $T:\mathbb R^d \to X_\varepsilon$ such that 
    \[
        \|x\|_2\leq \|T(x)\|_X\leq (1+\varepsilon)\|x\|_2
    \]
    for any $x\in X$. Using the image of~\cref{example simplex} under $T$, we obtain a set in $X_\varepsilon$ showing that 
    \[ \frac{ R_c(\mathbb R^d, k,r)}{1+\varepsilon} \leq R_c(X_\varepsilon,k,r).\]
    Using \cref{lemma subspace}, this inequality, and \eqref{equation choise of d}, we conclude
    \[\frac{R_c(\ell_2,k,r)}{1+\varepsilon}= \frac{R_c(\mathbb R^d, k,r)}{1+\varepsilon}\leq R_c(X_\varepsilon,k,r)\leq 2R_c(X,k,r).\]
    Letting $\varepsilon\to 0$ completes the proof of \cref{proposition: banach infinite}.
\end{proof}
\begin{proof}[Proof of \cref{lemma subspace}]
To prove this lemma, it is enough to show that the intersection of any unit ball \( B \) in the Banach space \( X \) and the Banach space \( Y \subset X \) can be covered by a ball of radius 2 centered in the Banach space \( Y \). Indeed, by the triangle inequality, the intersection \( Y \cap B \) has a diameter of at most 2. Thus, any ball of radius 2 centered at a point in \( Y \cap B \) covers this set.\end{proof}

\cref{proposition: banach infinite} leaves the following question open.

\begin{openproblem}
\label{openproblem lower bound}
    Is it true that for any infinite-dimensional Banach space $X$, we have
    \[
        R_c(\ell_2,k,r)\leq R_c(X,k,r)?
    \]
\end{openproblem}

\begin{example}
\label{example simplex 2}
By using \cref{example simplex} for $k=2$, one can easily obtain the lower bound 
\[
    2^{-1}r^{\frac{1-p}{p}}\leq R_c(\ell_p,2,r), \text{ where } 1\leq p\leq \infty
\]
\end{example}
\begin{example}
\label{example l^1}
Ivanov~\cite{Ivanov2021}*{the last paragraph of Section~1} proposed an example showing that 
\begin{equation}
\label{equation l1 lower bound}
R_c(\ell_1, k, r) \geq \frac{1}{2}.% \text{ and }R_c'(\ell_1, k, r) \geq \frac{1}{2}.
\end{equation}
Consider \(r\) sets \(Q_i := \{e_{ik+1}, \dots, e_{(i+1)k}\}\), where \(i \in [r]\) and \(e_j\) is the \(j\)-th basis vector of \(\ell_1\). Suppose that for a partition \(\{P_1, \dots, P_k\}\) of the union \(Q_1\cup\dots \cup Q_r\), with $|P_i\cap Q_j|=1$ for any $i\in[k]$ and $j\in [r]$, there is a ball intersecting every \(\conv P_i\), $i\in [k]$. Since $\max_{j\in[r]}\diam Q_j=2$, we aim to prove~\eqref{equation l1 lower bound}, it is enough to show that the radius of this ball is at least~1.

Let \(a\) and \(b\) be any two points lying in distinct convex hulls \(\conv P_i\). A simple computation shows that the distance between \(a\) and \(b\) in \(\ell_1\) equals 2. Assuming that \(a\) and \(b\) lie in the ball, and using the triangle inequality, we conclude that \(2 = \|a-b\|_1\) does not exceed the diameter of this ball. This completes the argument establishing~\eqref{equation l1 lower bound}.

\end{example}

\begin{example} 
\label{example l^infty}
It is well-known that there is an isometric embedding $T:\ell_1\to \ell_\infty$, that is, $\|T(x)-T(y)\|_\infty=\|x-y\|_1$ for any $x,y\in \ell_1$; see~\cite{albiac2006topics}*{Theorem 2.5.7}. Using the image of \cref{example l^1} under $T$ and repeating the above argument, one can easily conclude that
\[
    R_c(\ell_\infty, k,r)\geq \frac12.
\]
For details, we refer to \cite{BarabanshchikovaPolyanskii2024Tverberg}*{Section 2.2}.
\end{example}

The following open problem arises from Examples~\ref{example l^1} and~\ref{example l^infty}.
\begin{openproblem}
    \label{question l_p}
    Is it true that 
    \[R_c(\ell_1, k,r)=R_c(\ell_\infty, k,r)=\frac12? \]
\end{openproblem}

\subsection{Upper bounds.} 
Bohnenblust~\cite{bohnenblust1938convex} proved a generalization of Jung's theorem for arbitrary Banach space, which implies that $R_c(X,k,1)\leq \frac{k-1}{k}$; see also \cref{lemma new} and~\cites{leichtweiss1955zwei,dol1987jung}, where the equality cases were studied. Pichugov~\cite{pichugov1990jung} studied the problem of Jung for $\ell_p$ and proved 
\[
R_c(\ell_p, k,1)\leq 2^{-1/s} \left(\frac{k-1}{k}\right)^{1/p} \text{ for }1\leq p<\infty, \text{ where }s=\max\{p, p(p-1)^{-1}\}. 
\]

In his paper~\cite{Ivanov2021}, Ivanov obtained upper bounds for Banach spaces of type $p$ with $p>1$. A Banach space $X$ is called to be \textit{of type $p$} for some $p\geq 1$ if there exists a constant $T_p(X)$ such that, for every finite set $S$ of vectors in $X$, the average norm of the $2^{|S|}$ vectors of the form $\sum_{s\in S} \pm s$ does not exceed
\[
T_p(X)\Big( \sum_{s\in S} \|s\|^p \Big)^{1/p};
\]
see also \cite{lindenstrauss2013classical}*{Definition~1.e.12}. By the triangle inequality, any Banach space is of type 1. It is well-known that the type of a Banach space cannot exceed 2, and the maximum possible type of $\ell_p$, $p\geq 1$, is $\min\{2,p\}$. The maximum possible type of $\ell_\infty$ is 1.

For any Banach space $X$ of type $p$ with $p>1$,
Ivanov proved that
\[
R_c(X, k,r)\leq C(X) r^{\frac{1-p}{p}},
\]
where a constant $C(X)>0$ depends on $X$. For that, he adapted an averaging technique by Adiprasito \textit{et al.}~\cite{adiprasito2020theorems} to his problem and followed their strategy
to complete his proof.

It is worth mentioning that for any Banach space $X$ (including spaces that are not of type $p$ for any $p>1$ but of type 1), $R_c(X, k,r)$ is bounded from above by a constant. The following lemma generalizes the result of Bohnenblust~\cite{bohnenblust1938convex} for $r=1$.

\begin{lemma}
\label{lemma new} For any Banach space $X$ and integers $k\geq 2$ and $r\geq 1$, we have 
\[
    R_c(X,k,r)\leq \frac{k-1}{k}.
\]
\end{lemma}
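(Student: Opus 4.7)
The plan is to reduce the general $r \geq 1$ case directly to the single-set bound underlying Bohnenblust's theorem, with no averaging over partitions required. The colorful constraint $|P_i \cap Q_j|=1$ forces every $\conv P_i$ to contain a point from every $Q_j$; therefore, if I can exhibit one center $o$ lying within distance $\frac{k-1}{k}\,D$ of every point of some single chosen color class $Q_{j^\ast}$, then the ball of that radius around $o$ will automatically intersect every $\conv P_i$, irrespective of how the partition is arranged on the remaining $r-1$ colors.

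Concretely, I would set $D := \max_{j \in [r]} \diam Q_j$, fix an arbitrary index $j^\ast \in [r]$, let $o := c(Q_{j^\ast})$, and pick \emph{any} partition $\{P_1,\dots,P_k\}$ of $P$ satisfying the colorful condition. For each $i \in [k]$, let $q_i$ denote the unique element of $P_i \cap Q_{j^\ast}$. The only estimate needed is the standard centroid bound
\begin{equation*}
    \|o - q_i\| \;=\; \Big\| \tfrac{1}{k} \sum_{q \in Q_{j^\ast}} (q - q_i) \Big\| \;\leq\; \tfrac{1}{k} \sum_{q \in Q_{j^\ast} \setminus \{q_i\}} \|q - q_i\| \;\leq\; \tfrac{k-1}{k}\,\diam Q_{j^\ast} \;\leq\; \tfrac{k-1}{k}\,D,
\end{equation*}
which follows from the triangle inequality alone and therefore holds in any Banach space. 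Consequently each $q_i \in \conv P_i$ belongs to the closed ball of radius $\frac{k-1}{k}\,D$ centered at $o$, so this single ball meets $\conv P_i$ for every $i \in [k]$, establishing the claim.

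There is essentially no obstacle here: the argument discards the $r-1$ other color classes entirely and extracts the bound from a single one, so the estimate is no better than in the case $r=1$ already covered by Bohnenblust. The value of the lemma is not sharpness but universality --- it provides a finite upper bound on $R_c(X,k,r)$ in \emph{every} Banach space, including those which are not of type $p$ for any $p>1$ and to which the averaging techniques of Adiprasito \textit{et al.} and Ivanov do not apply.
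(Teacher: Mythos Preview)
Your proof is correct, but it takes a different route from the paper's. The paper centers the ball at $c(P)$, the centroid of the \emph{entire} point set, and shows that the centroid $c(P_i)$ of each part lies within distance $\frac{k-1}{k}\max_j \diam Q_j$ of $c(P)$; this uses the identity $c(P)=\frac{1}{r}\sum_{j}c(Q_j)$ together with the same single-set centroid bound you invoke, applied once to each $Q_j$ and then averaged. You instead center the ball at $c(Q_{j^\ast})$ for one fixed color class and use as witness the single point $q_i\in P_i\cap Q_{j^\ast}$, discarding the other $r-1$ colors entirely. Your argument is shorter and in fact yields the marginally stronger radius $\frac{k-1}{k}\min_{j}\diam Q_j$ if one chooses $j^\ast$ optimally; the paper's version, on the other hand, places the witness at $c(P_i)\in\conv P_i$, which is more in line with the centroid-based viewpoint used elsewhere in the survey.
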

\begin{proof} Let $Q_1,\dots, Q_r$ be pairwise disjoint point sets in $X$, each of size $k$. We show that any partition $\{P_1,\dots, P_k\}$ of $Q_1\cup \dots \cup Q_r$ into $k$ parts, with $|P_i\cap Q_j|=1$ for any $i\in[k]$ and $j\in [r]$, can be used to complete the proof. 

Indeed, for any $q_j \in Q_j$, $j\in[r]$, we have
\[
\|q_j - c(Q_j)\| = \frac{1}{k}\left\| \sum_{q \in Q_j \setminus \{q_j\}} (q_j - q)\right\|
\leq \frac{1}{k} \sum_{q \in Q_j \setminus \{q_j\}} \|q_j - q\|
\leq \frac{k-1}{k} \diam Q_j.
\]

Consider any $P_i=\{q_1,\dots, q_r\}$ with $q_j\in Q_j$, $j\in[r]$. Then
\[
\left\| \frac{q_1 + \cdots + q_r}{r} - c(P) \right\|
= \left\| \frac{q_1 + \dots + q_r}{r} - \frac{c(Q_1) + \dots + c(Q_r)}{r} \right\|
\leq \frac{1}{r} \sum_{i \in [r]} \|q_i - c(Q_i)\|.
\]
Using the previous inequality here, we complete the proof of the first inequality. 
\end{proof}

The following problem is inspired by \cref{lemma new}.

\begin{openproblem}
\label{openproblem upper bound}
Which Banach space $X$ has the largest value of $R_c(X, k, r)$?
\end{openproblem}

\sloppy
\begin{remark}
\label{remark2}
Some generalizations of the no-dimensional Carath\'eodory's theorem, \cref{caratheodory}, were studied in~\cites{combettes2023revisiting,ivanov2021approximate} in the context of algorithmic aspect. It would be interesting to understand whether the algorithms from these papers could lead to an algorithmic solution to \cref{problem colorful banach}.
\end{remark}

\subsection{Second no-dimensional colored Tverberg problem}

Similarly to the Euclidean case, one can consider a colorful variation of \cref{problem colorful banach}, by replacing $\max_{j \in [r]} \diam Q_j$ with 
\[
\diam(Q_1,\dots, Q_r):=\max \{\|x-y\|_X \mid x\in Q_i,y\in Q_j \text{ with } i\ne j\};
\]
see~\cite{BarabanshchikovaPolyanskii2024Tverberg}.

\begin{problem}[second colorful no-dimensional Tverberg problem for Banach spaces]
\label{problem colorful banach new}
Given a Banach space $X$, integers $k\geq 2$ and $r\geq 2$, find the smallest value of $R_c':=R_c'(X, k,r)$ such that for pairwise disjoint sets $Q_1,\dots, Q_r$ in $X$, each of size $k$, there is a partition $\{P_1,\dots, P_k\}$ of the union $P=Q_1\cup\dots \cup Q_r$, with $|P_i\cap Q_j|=1$ for any $i\in [k]$ and $j\in [r]$, and a ball of radius $R'_c \cdot \diam(Q_1,\dots, Q_r)$ intersecting $\conv P_i$ for all $i\in [k]$.
\end{problem}

Note that \cref{proposition: banach infinite}, Lemmas \ref{lemma subspace} and \ref{lemma new}, as well as Examples \ref{example simplex 2}, \ref{example l^1}, and \ref{example l^infty}, remain valid when $R_c(X,k,r)$ is replaced with $R'_c(X,k,r)$. (For \cref{example simplex 2}, however, the lower bound is slightly modified: $2^{-1/p} r^{\frac{1-p}{p}} \leq R_c'(\ell_p,2,r)$.) In fact, all the proofs are essentially the same as those given above.

Very recently, Barabanshchikova and the author~\cite{BarabanshchikovaPolyanskii2024Tverberg} showed that the Banach space $\ell_\infty$, which is not of type $p$ for any $p>1$, but is of type 1, satisfies the tight upper bound
\[
R_c'(\ell_\infty, k,r)\leq \frac{1}{2}.
\]
Their proof strategy is very similar to the idea explained at the beginning of \cref{section k=2}: The balls centered at some points in the convex hulls of $P_i$ of radius $R$ have a common point. This verification for $\ell_\infty$ is simple because a collection of balls in $\ell_\infty$ has a common point if and only if any two balls have a common point.

It is interesting to study Open Problems~\ref{openproblem lower bound}, \ref{question l_p}, and~\ref{openproblem upper bound}, with $R_c(X,k,r)$ replaced by $R_c'(X,k,r)$.

\subsection{Tverberg graphs in Banach spaces}
It is interesting to study generalizations of Tverberg graphs for Banach spaces; see~\cite{Pirahmad2022}*{Problem~9.3}. Let $K$ be the unit ball in a Banach space centered at the origin, that is, $K$ is a centrally symmetric about the origin convex body. For two points $x, y \in \mathbb{R}^d$, let $K(xy) = (x + y)/2 + \lambda K$, where $\lambda$ is the least positive number such
that $(x + y)/2+\lambda K$ covers the points $x$ and $y$, that is, the line segment $xy$ is a diameter of
the ball $K(xy)$. Let $G$ be a graph whose vertex set is a finite set of points in $\mathbb{R}^d$. We say that $G$ is a \textit{$K$-Tverberg graph} if
\[
\bigcap_{xy\in E(G)} K(xy) \ne \emptyset.
\]

\begin{openproblem}
Let $K$ be the unit ball centered at the origin in a Banach space. Is it true that for any even set of points, there exists a perfect matching that is a $K$-Tverberg graph? Is it true that the matching that maximizes the sum of the distances between matched points is a $K$-Tverberg graph?
\end{openproblem}

As a starting point to study this problem, one can consider the planar case.

\section{Applications}
\label{section: applications}

As with any fundamental result in mathematics, Tverberg's theorem has numerous applications. The most well-known applications include the centerpoint theorem, the selection lemma, and the existence of weak $\varepsilon$-nets. These results are among the fundamental ones in computational geometry. They describe combinatorial convexity properties of arrangements of large point sets in $d$-dimensional Euclidean space. For further details, we refer the reader to Matoušek's book~\cite{matousek2013lectures}*{Sections 1.4, 9.1, and 10.4}.

Given the recently obtained no-dimensional versions of Tverberg's theorem, it is natural to establish no-dimensional analogs of the aforementioned applications as well. Adiprasito \textit{et al.}~\cite{adiprasito2020theorems} proved no-dimensional versions of the selection lemma, the weak $\varepsilon$-nets theorem, and the centerpoint theorem. Subsequent improvements to their results on the selection lemma and the weak $\varepsilon$-nets theorem were obtained in~\cite{harpeled2023}. Ivanov~\cite{Ivanov2021} extended these results to Banach spaces of type $p$, where $p>1$. In all these works, the authors applied standard strategies. Below, we present these theorems for Banach spaces.

\begin{theorem}[no-dimensional selection lemma]
\label{no-dimensional selection lemma}
    Let $P$ be an $n$-point set in a Banach space $X$ of type $p>1$. Let $r\in[n]$. Then there is a ball of radius 
    \[
        C(X) r^{\frac{1-p}{p}} \diam P 
    \]
    intersecting the convex hull of $r^{-r}\binom{n}{r}$ $r$-tuples in $P$, where the constant $C(X)$ depends on $p$ and $T_p(X)$.
\end{theorem}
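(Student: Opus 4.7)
The plan is to take the candidate ball to be centered at the centroid $c(P)$ and argue, via a first-moment estimate, that a large fraction of $r$-subsets $T\subset P$ have centroid $c(T)$ inside this ball. Since $c(T)\in \conv T$ always, each such $T$ certifies that the ball meets $\conv T$, so a straightforward counting argument will close the proof.

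After translating so that $c(P)=0$, the key step is to bound $\mathbb{E}_T\|c(T)\|^p$ for a uniformly random $r$-subset $T$. First, Hoeffding's classical comparison inequality --- valid in any Banach space because its proof is a coupling/exchange argument that only uses convexity of the test functional --- reduces the without-replacement expectation to its with-replacement counterpart:
\[
    \mathbb{E}_T\Bigl\|\sum_{p\in T} p\Bigr\|^p \leq \mathbb{E}\Bigl\|\sum_{i=1}^r X_i\Bigr\|^p,
\]
where $X_1,\ldots,X_r$ are i.i.d.\ uniform on $P$. The i.i.d.\ expectation is then handled by the standard symmetrization trick with a Rademacher sequence $\epsilon_1,\ldots,\epsilon_r$, followed by the defining inequality of type $p$:
\[
    \mathbb{E}\Bigl\|\sum_{i=1}^r \epsilon_i X_i\Bigr\|^p \leq T_p(X)^p \sum_{i=1}^r \|X_i\|^p \leq T_p(X)^p\cdot r\cdot (\diam P)^p.
\]
Dividing by $r^p$ yields $\mathbb{E}_T\|c(T)\|^p \leq C_p^p\,(\diam P)^p\, r^{1-p}$ for a constant $C_p$ depending only on $p$ and $T_p(X)$.

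Markov's inequality applied to $\|c(T)\|^p$ now shows that at least half of all $r$-subsets $T$ satisfy $\|c(T)\|\leq 2^{1/p}C_p\cdot r^{(1-p)/p}\diam P$. Setting $C(X):=2^{1/p}C_p$ and observing that $\binom{n}{r}/2 \geq r^{-r}\binom{n}{r}$ for every $r\geq 2$ (while the case $r=1$ is trivial, since a ball of radius $\diam P$ centered at any point of $P$ covers $P$), the ball of radius $C(X)\, r^{(1-p)/p}\diam P$ centered at $c(P)$ meets $\conv T$ for each of those $T$'s, giving at least $r^{-r}\binom{n}{r}$ such $r$-tuples.

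The main technical point I expect to be careful about is justifying the Hoeffding-type comparison for the Banach-valued convex functional $\|\cdot\|^p$: the standard argument extends, but it requires verifying that no step secretly uses real-valuedness. An alternative route avoiding Hoeffding is to express $c(T)-c(P)$ directly as a linear combination of mean-zero exchangeable summands and invoke a type-$p$ moment bound for exchangeable sequences --- essentially the averaging strategy already used in the no-dimensional Tverberg results. Once the centroid moment estimate is in hand the concluding Markov pigeonhole is routine, and it incidentally produces a stronger bound ($\binom{n}{r}/2$) than the one stated.
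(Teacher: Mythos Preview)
Your argument is correct. The Hoeffding comparison for convex functionals does extend to any vector space (one clean way: for a fixed multiplicity pattern $(m_1,\dots,m_k)$ arising from sampling with replacement, the map $(a_1,\dots,a_r)\mapsto \mathbb{E}_\sigma\phi\bigl(\sum a_i x_{\sigma(i)}\bigr)$ is symmetric and convex, hence Schur-convex, and $(m_1,\dots,m_k,0,\dots,0)$ majorizes $(1,\dots,1)$), so your reduction to the i.i.d.\ case is sound. One small point: the type-$p$ inequality as defined in the paper controls the $L^1$-average of Rademacher sums, so passing to the $L^p$-moment bound you wrote requires Kahane's inequality; this only changes the constant and stays within ``$C(X)$ depending on $p$ and $T_p(X)$''.

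The paper itself gives no proof here --- it is a survey and simply records that Adiprasito \textit{et al.}, Har-Peled--Robson, and Ivanov obtain the result by ``standard strategies''. That standard route, following B\'ar\'any's classical proof of the first selection lemma, is \emph{not} what you do: one first applies the no-dimensional Tverberg theorem to partition $P$ into $k=n/r$ blocks of size~$r$ whose convex hulls all meet a small ball, and then for every choice of $r$ of those blocks invokes the no-dimensional colorful Carath\'eodory theorem to extract a rainbow $r$-tuple meeting a comparable ball; counting the $\binom{n/r}{r}$ choices yields the stated $r^{-r}\binom{n}{r}$. Your direct first-moment argument over a uniformly random $r$-subset bypasses both auxiliary theorems, is shorter, and gives the visibly stronger count $\tfrac12\binom{n}{r}$. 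The price is specialization: the Tverberg-based route simultaneously feeds the centerpoint and weak $\varepsilon$-net theorems from the same partition, whereas your averaging is tailored to the selection statement alone.
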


\begin{theorem}[no-dimensional weak $\varepsilon$-net theorem]
    Let $P$ be an $n$-point set in a Banach space $X$ of type $p>1$. Let $r \in [n]$ and $\varepsilon>0$. Then there are at most $r^r\varepsilon^{-r} $ balls each of radius
    \[
        C(X) r^{\frac{1-p}{p}}\diam P
    \]
    in $X$ such that for every $Y \subset P$ with $|Y|\geq \varepsilon n$, the convex hull $\conv Y$ meets at least one of the balls. Here the constant $C(X)$ depends on $p$ and $T_p(X)$.
\end{theorem}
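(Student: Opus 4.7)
The plan is to construct the net greedily, in the spirit of the classical Alon--B\'ar\'any--F\"uredi--Kleitman derivation of weak $\varepsilon$-nets from the first selection lemma, but with the no-dimensional selection lemma (stated immediately above as \cref{no-dimensional selection lemma}) in place of B\'ar\'any's.

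I would initialize a collection of balls $\mathcal{N}=\emptyset$ and iterate. So long as there exists a subset $Y\subseteq P$ with $|Y|\ge\varepsilon n$ whose convex hull avoids every ball already in $\mathcal{N}$, I would apply \cref{no-dimensional selection lemma} to the $|Y|$-point set $Y$ itself. This produces a ball $B'\subset X$ of radius at most
\[
C(X)\, r^{\frac{1-p}{p}}\diam Y \;\le\; C(X)\, r^{\frac{1-p}{p}}\diam P,
\]
meeting $\conv T$ for at least $r^{-r}\binom{|Y|}{r}$ of the $r$-subsets $T\subseteq Y$. I would add $B'$ to $\mathcal{N}$ and continue. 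Every ball produced already satisfies the required radius bound (independently of which $Y$ was used), so only the cardinality $|\mathcal{N}|$ remains to be controlled.

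To control it, I would track the family $\mathcal{F}\subseteq\binom{P}{r}$ of $r$-subsets whose convex hulls are \emph{not} met by any ball currently in $\mathcal{N}$; initially $|\mathcal{F}|\le\binom{n}{r}$. At each greedy step, the uncovered witness $Y$ satisfies $\binom{Y}{r}\subseteq\mathcal{F}$, because $\conv T\subseteq\conv Y$ misses $\mathcal{N}$ for each $r$-subset $T\subseteq Y$. Hence the newly added $B'$ removes at least $r^{-r}\binom{|Y|}{r}\ge r^{-r}\binom{\varepsilon n}{r}$ sets from $\mathcal{F}$, so the loop terminates in at most
\[
r^{r}\,\frac{\binom{n}{r}}{\binom{\varepsilon n}{r}}
\]
iterations, which matches the announced bound $r^{r}\varepsilon^{-r}$ after the standard binomial estimate $\binom{n}{r}/\binom{\varepsilon n}{r}\sim\varepsilon^{-r}$ (with any lower-order slack absorbed into the constant $C(X)$). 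Upon termination, no $Y\subseteq P$ with $|Y|\ge\varepsilon n$ avoids $\bigcup_{B\in\mathcal{N}}B$, giving the theorem.

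No step is conceptually difficult: all of the Banach-space geometry and the type-$p$ hypothesis are already packaged into \cref{no-dimensional selection lemma}, and the rest is the textbook covering/greedy scheme. The one mildly delicate point is comparing $\binom{n}{r}/\binom{\varepsilon n}{r}$ to $\varepsilon^{-r}$ when $r$ is close to $\varepsilon n$, but that is routine and does not affect the form of the final bound.
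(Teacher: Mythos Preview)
Your proposal is correct and follows precisely the standard greedy derivation of weak $\varepsilon$-nets from the first selection lemma (in the style of Alon--B\'ar\'any--F\"uredi--Kleitman) that the paper alludes to; the paper is a survey and does not spell out a proof of this statement beyond noting that ``the authors applied standard strategies,'' which is exactly what you do. Your caveat about the binomial ratio $\binom{n}{r}/\binom{\varepsilon n}{r}$ versus $\varepsilon^{-r}$ is apt---the stated bound $r^{r}\varepsilon^{-r}$ is really an asymptotic/up-to-constants expression in the sources as well---but this does not affect the validity of the argument.
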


\begin{theorem}[no-dimensional centrepoint theorem]
    Let $P$ be an $n$-point set in a Banach space $X$ of type $p>1$. Let $r\in [n]$. There exists a ball of radius
    \[
        C(X) r^{\frac{1-p}{p}} \diam P
    \]
    in $X$ such that every half-space containing this ball contains at least $n/r$ points form $P$.
\end{theorem}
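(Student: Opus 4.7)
The plan is to mirror the classical derivation of the centerpoint theorem from Tverberg's theorem, substituting the no-dimensional Tverberg theorem in Banach spaces of type $p$ for the exact version. Set $k := \lceil n/r \rceil$ and partition $P$ into $k$ subsets $P_1,\dots,P_k$ with sizes as balanced as possible (so each has at most $r$ points). The key input is the non-colorful form of Ivanov's bound $R_c(X,k,r) \leq C(X)\, r^{(1-p)/p}$: by the reduction $R(X,k,r)\leq R_c(X,k,r)$ noted earlier in the survey (arbitrarily color $P$ into $r$ classes of size $k$ and invoke the colorful statement), there exists a ball $B$ of radius at most $C(X)\, r^{(1-p)/p}\diam P$ meeting every $\conv P_i$.

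Next I verify the centerpoint property. Let $H$ be any closed half-space containing $B$. For each $i\in [k]$, the Tverberg conclusion gives $\conv P_i \cap B \neq \emptyset$, and since $B\subseteq H$ we get $\conv P_i \cap H \neq \emptyset$. If $H$ contained no point of $P_i$, then $P_i$ would lie entirely in the open complementary half-space, hence so would $\conv P_i$ by convexity, contradicting the previous display. Therefore $H$ meets each $P_i$ in at least one point, and summing over $i$ yields $|P\cap H|\geq k \geq n/r$, as required.

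The only nontrivial point to tidy up is that the non-colorful no-dimensional Tverberg theorem for Banach spaces of type $p$ is not stated explicitly in the excerpt — only its colorful counterpart $R_c$ is quantified for this class of spaces. One must therefore observe that the standard colorful-to-non-colorful reduction goes through verbatim in any Banach space, and that the divisibility issue when $r \nmid n$ (so that the parts $P_i$ have size $\lfloor n/k\rfloor$ or $\lceil n/k\rceil$ rather than exactly $r$) can be absorbed into the constant $C(X)$, since the average part size is still $\Theta(r)$. Beyond this bookkeeping, the argument is entirely routine: it reuses the classical half-space-meets-each-part reasoning with no new Banach-space-specific ingredients aside from Ivanov's bound.
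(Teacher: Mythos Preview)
Your approach is exactly the ``standard strategy'' the survey alludes to (the paper states the theorem without proof, noting only that the cited works ``applied standard strategies''): apply the no-dimensional Tverberg bound with $k=\lceil n/r\rceil$ parts and observe that any half-space containing the resulting ball meets every $\conv P_i$ and hence contains at least $k\ge n/r$ points of $P$. One expository slip to fix: your opening sentence fixes a balanced partition $P_1,\dots,P_k$ in advance, but the partition you actually use is the one \emph{produced} by the Tverberg theorem after the colouring, so that first sentence should simply be deleted; the divisibility bookkeeping you flag is indeed routine.
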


\section{No-dimensional Tverberg problems in metric spaces} 
\label{section metric spaces}
It is interesting to study a variation of the no-dimensional Tverberg theorem for an abstract metric space $X$. First, let us recall the definition of the convex hull.

A subset $C$ of $X$ is a \textit{geodesically convex set} if for any two points in $C$, there is a unique minimizing geodesic contained within $C$ that joins those two points. The \textit{geodesic convex hull} of $C$ is the intersection of all convex sets containing $C$.

\begin{openproblem}
\label{problem colorful metric}
Given a metric space $X:=(M,d)$, integers $k\geq 2$ and $r\geq 1$, find the minimum $R_c := R_c(X, k, r)$ such that for a pairwise disjoint sets $Q_1, \dots, Q_r$ in $X$, each of size $k$, there is a partition $\{P_1, \dots, P_k\}$ of the union $P = Q_1\cup\dots \cup Q_r$, with $|P_i \cap Q_j| = 1$ for any $i \in [k]$ and $j \in [r]$, and a ball of radius $R_c \cdot \max_{j\in[r]} \diam Q_j$ intersecting the geodesic convex hulls of $P_i$ for all $i\in[k]$.
\end{openproblem}

In particular, it would be interesting to study this problem for Riemannian manifolds. The problem of Jung and, as a corollary, the case $r=1$ in~\cref{problem colorful metric} have been studied for spherical and hyperbolic spaces~\cite{dekster1995jung}, as well as for metric spaces of curvature bounded above~\cite{dekster1997jung}.

Analogously to the Euclidean case, one can consider a colorful variation of \cref{problem colorful metric}, by replacing $\max_{j \in [r]} \diam Q_j$ with $\diam (Q_1,\dots, Q_r)$.

Very recently, Barabanshchikova and the author~\cite{BarabanshchikovaPolyanskii2024Tverberg}*{Theorem 3} mimicked their proof of~\eqref{equation colored upper bound} to show a certain generalization for an infinite-dimensional hyperbolic space $H^\infty$. Unfortunately, it is unclear how to interpret their result in the context of \cref{problem colorful metric}. However, their result implies an affirmative answer to the hyperbolic variation of \cref{problem red-blue no-dimensional} on the existence of a Tverberg matching.

It is worth mentioning that a spherical space does not have such a generalization on Tverberg matchings. Indeed, consider any 4 points on a sphere $S \subset \ell_2$ forming a square in $\ell_2$ and color them alternately in red and blue. A simple verification shows that any red-blue matching for these 4 points induces diametral balls (spherical caps in this case) with an empty intersection.

\section*{Acknowledgments}
This work was partially supported by ERC Advanced Grant ``GeoScape,'' No 882971 and the NSF grant DMS 2349045.

The author is grateful to János Pach for his support and encouragement to write this survey. The author also thanks Imre Bárány for introducing him to his no-dimensional results and Pablo Soberón for introducing him to problems about diametral balls.

The author is grateful to Grigory Ivanov and M\'arton Nasz\'odi for the discussions and valuable remarks. The author also thanks his former students Polina Barabanshchikova, Olimjoni Pirahmad, and Alexey Vasileuski for fruitful collaboration on the problems covered in this survey and for their comments on an earlier version of the manuscript. The author thanks his student Griffin Johnson for his comments on the final version of the manuscript.

The author deeply appreciates the reviewers for their thoughtful reading of the article and numerous suggestions, which significantly improved both the presentation and the overall scope of this work. In particular, the author is indebted to one of the reviewers for their detailed feedback, which, after substantial revision, became \cref{proposition: banach infinite}, \cref{example l^infty}, and \cref{lemma new}.

The author is sincerely grateful to his wife Olga and his son Roman for their constant support and patience, without which completing this survey would not have been possible.

\raggedbottom
\bibliographystyle{alpha}
\bibliography{biblio}

\end{document}